\title[On semi-continuity problems for minimal log discrepancies]
{On semi-continuity problems for minimal log discrepancies}
\author{Yusuke Nakamura}
\address{Graduate School of Mathematical Sciences, 
the University of Tokyo, 3-8-1 Komaba, Meguro-ku, Tokyo 153-8914, Japan.}
\email{nakamura@ms.u-tokyo.ac.jp}
\theoremstyle{plain}
\newtheorem{thm}{Theorem}[section]
\newtheorem{prop}[thm]{Proposition}
\newtheorem{lem}[thm]{Lemma}
\newtheorem{cor}[thm]{Corollary}
\newtheorem{conj}[thm]{Conjecture}
\theoremstyle{definition}
\theoremstyle{remark}
\newtheorem{rmk}[thm]{Remark}
\newcommand{\mbA}{\mathbb{A}}
\newcommand{\mbN}{\mathbb{N}}
\newcommand{\mbQ}{\mathbb{Q}}
\newcommand{\mbR}{\mathbb{R}}
\newcommand{\mbC}{\mathbb{C}}
\newcommand{\mbL}{\mathbb{L}}
\newcommand{\mbZ}{\mathbb{Z}}
\newcommand{\mcD}{\mathcal{D}}
\newcommand{\mcJ}{\mathcal{J}}
\newcommand{\mcI}{\mathcal{I}}
\newcommand{\mcO}{\mathcal{O}}
\newcommand{\mcV}{\mathcal{V}}
\newcommand{\mcX}{\mathcal{X}}
\newcommand{\mcY}{\mathcal{Y}}
\newcommand{\mcZ}{\mathcal{Z}}
\newcommand{\mfa}{\mathfrak{a}}
\newcommand{\mfb}{\mathfrak{b}}
\newcommand{\mfc}{\mathfrak{c}}
\newcommand{\mfs}{\mathfrak{s}}
\newcommand{\mfA}{\mathfrak{A}}
\newcommand{\mfR}{\mathfrak{R}}
\DeclareMathOperator{\mld}{mld}
\DeclareMathOperator{\sht}{sht}
\DeclareMathOperator{\ord}{ord}
\DeclareMathOperator{\Jac}{Jac}
\DeclareMathOperator{\Spec}{Spec}
\DeclareMathOperator{\codim}{codim}
\DeclareMathOperator{\Hom}{Hom}
\DeclareMathOperator{\ideal}{ideal}
\DeclareMathOperator{\cent}{c}
\DeclareMathOperator{\Conj}{Conj}
\DeclareMathOperator{\Aut}{Aut}
\begin{document}
\begin{abstract}
We show the semi-continuity property of minimal log discrepancies for 
varieties which have a crepant resolution in the category of Deligne-Mumford stacks. 
Using this property, we also prove the ideal-adic semi-continuity problem for 
toric pairs. 
\end{abstract}

\subjclass[2010]{Primary 14B05; Secondary 14E18}
\keywords{minimal log discrepancies, ACC conjecture, semi-continuity problem, twisted jet stacks}

\maketitle

\section{Introduction}
The minimal log discrepancy (mld for short) was introduced by Shokurov, in order to reduce the conjecture of 
terminations of flips to a local problem about singularities. 
Recently, this has been a fundamental invariant in the minimal model program. 
There are two related conjectures about mld's, 
the ACC (ascending chain condition) conjecture and the LSC (lower semi-continuity) conjecture. 
Shokurov showed that these two conjectures imply the conjecture of terminations of flips \cite{Sho:letter}. 

In the first half of this paper, we consider the LSC conjecture, proposed by Ambro \cite{Amb2}.

\begin{conj}[LSC conjecture]\label{conj:LSC}
Let $(X, \Delta)$ be a log pair. Then, the function
\[
|X| \to \mbR _{\ge 0} \cup \{ - \infty \}; \qquad
x \mapsto  \mld _x (X, \Delta)
\]
is lower semi-continuous, where we denote by $|X|$ the set of all closed points of $X$. 
\end{conj}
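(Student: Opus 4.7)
My plan is to prove the conjecture under the hypothesis suggested by the abstract: that $(X,\Delta)$ admits a crepant resolution $\pi\colon \mcY \to X$ in the category of smooth Deligne--Mumford stacks. The full LSC conjecture seems out of reach by current methods, so I would aim only at this restricted setting, following the stacky extension of the Ein--Musta\c{t}\u{a}--Yasuda jet-scheme approach. Recall that, on a smooth variety, $\mld_x(X,\mfa^c)$ can be written as an infimum over $m$ of $\codim \Cont^{\ge m}(\mfa) - cm$, and since codimensions of contact loci are upper semi-continuous in the base point, LSC on smooth varieties is essentially immediate. The whole strategy is to lift this picture to the crepant stacky resolution.

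The first step is to use the crepancy relation $K_\mcY + \Delta_\mcY = \pi^*(K_X+\Delta)$ to transfer the computation of mld from $X$ to the smooth stack $\mcY$. Via Yasuda's motivic integration on twisted jet stacks, one expects an identity of the form
\[
\mld_x(X,\Delta) \;=\; \inf_{\xi}\bigl(\codim_{\mcJ_\infty \mcY} C_\xi \,-\, \ord_\xi(\Delta_\mcY)\bigr),
\]
where $\xi$ runs over twisted arcs of $\mcY$ centered above $x$ and $C_\xi$ is the corresponding twisted contact locus in $\mcJ_\infty \mcY$. The second step is semi-continuity itself: if $x_0$ is a specialization of $x$, every twisted arc above $x_0$ arises as the limit of a family of twisted arcs above nearby $x$, so the corresponding contact loci specialize and upper semi-continuity of codimension in such families gives $\codim C_{\xi_0}\le\liminf_{x\to x_0}\codim C_\xi$. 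Combining this with the formula above yields the desired inequality $\mld_{x_0}(X,\Delta)\le\liminf_{x\to x_0}\mld_x(X,\Delta)$.

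The step I expect to be the main obstacle is controlling the \emph{twisted} contribution in Yasuda's change-of-variables formula under specialization. The stabilizers at twisted arcs of $\mcY$ can jump as $x\to x_0$, and the discrepancy shift induced by the inertia representation on the tangent space depends on these stabilizers; one has to show that this shift interacts correctly with the specialization of contact loci, so that the codimension bound above is not spoiled by a sudden jump in the shift at $x_0$. A secondary, more foundational, difficulty is the construction of the crepant DM-stack resolution $\mcY\to X$: this is available only in restricted settings (e.g.\ toric or quotient singularities, and the classes accessible to Yasuda's untwisting), which explains why the argument as sketched cannot deliver the full conjecture but only the class of pairs treated in the paper.
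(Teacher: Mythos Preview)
Your outline is the paper's strategy: pull $\mld_x$ back to the crepant DM resolution $\mcY$ via Yasuda's transformation rule, express it through dimensions of twisted contact loci in $\mcJ_\infty\mcY$, and deduce lower semi-continuity from upper semi-continuity of fiber dimensions. However, your diagnosis of where the work lies is inverted. The shift contribution you flag as the main obstacle is in fact the easy part: the stack $\mcY$ is fixed once and for all, so its inertia $\mcJ_0\mcY$ has finitely many connected components and $\sht$ is constant on each of them (this is Lemma~\ref{lem:shift}, essentially trivial in the absolute case). Stabilizers do not ``jump with $x$''; the decomposition into twisted sectors is global on $\mcY$, and the base point $x\in X$ only selects which arcs lie over it.

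What your sketch omits, and what the paper actually works for, are two other points. First, the identity $\mld_x=\inf_m(\cdots)$ is an infimum over infinitely many $m\in\mbN^s$, and an infimum of lower semi-continuous functions need not be lower semi-continuous; the paper shows (Lemma~\ref{lem:s}, via a log resolution in family and generic smoothness) that the infimum may be restricted to a \emph{finite} set $S$ that is uniform in $x$. Without this your specialization argument does not conclude. Second, upper semi-continuity of fiber dimensions inside $|\mcJ_{m'}\mcY|$ must be pushed down to closedness of a subset of $|X|$; the paper does this by observing that the relevant contact loci are $\mbC^*$-invariant for the scaling action on twisted jets, so their image under the truncation $\varphi_{m',0}$ is closed via the zero-section trick (Lemma~\ref{lem:closed_abs}), and then uses properness of the centre locus over the base to descend. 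These are the genuine technical steps; the twisted-sector shift you worry about dissolves once you note that the inertia stack of $\mcY$ does not vary with $x$.
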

In this conjecture,  $\mld _x (X, \Delta)$ denotes the minimal log discrepancy, 
and will be defined in section \ref{subsec:mld}. 
The value $\mld _x (X, \Delta)$ reflects the singularity of $X$ and $\Delta$ around $x$.

The LSC conjecture is known under some conditions. 
If $X$ is toric or $\dim X \le 3$, this conjecture is known by work of Ambro \cite{Amb}. 
If $X$ is smooth, this conjecture was proved by Ein, Musta\c{t}\u{a}, and Yasuda  \cite{EMY}. 
They used the description of mld's by the language of jet schemes. 
Ein and Musta\c{t}\u{a} extended this result to locally complete intersection varieties \cite{EM}. 

The first purpose of this paper is to prove the LSC conjecture on varieties with quotient singularities. 

Let $X$ be a $\mbQ$-Gorenstein normal variety, and assume that 
$X$ has a crepant resolution in the category of Deligne-Mumford stacks. 
Here, a proper birational morphism $f: \mcX \to X$ from a smooth Deligne-Mumford stack $\mcX$ is \textit{crepant} 
if $K_{\mcX} = f^* K_{X}$ holds. 

\begin{thm}\label{thm:main1}
Let $X$ be a $\mbQ$-Gorenstein normal variety. 
Assume $X$ has a crepant resolution in the category of Deligne-Mumford stacks. 
Then, the followings hold. 

\begin{itemize}
\item[(i)] 
Let $\Delta$ be an effective $\mbR$-Cartier divisor on $X$, and 
$\mfa$ an $\mbR$-ideal sheaf on $X$, that is, a formal product 
$\prod \mfa _i ^{r_i}$ of ideals $\mfa _i$ with $r_i$ positive real numbers. 
Then the function
\[
|X| \to \mbR _{\ge 0} \cup \{ - \infty \}; \qquad
x \mapsto  \mld _x (X, \Delta, \mfa)
\]
is lower semi-continuous. 

\item[(ii)] 
Let $T$ be a variety, $\Delta$ an effective $\mbR$-Cartier divisor on $X$, and $x$ a closed point of $X$. 
For an $\mbR$-ideal sheaf $\mfA$ on $X \times T$, 
the function
\[
|T| \to \mbR _{\ge 0} \cup \{ - \infty \}; \qquad
p \mapsto  \mld _x (X, \Delta, \mfA _p)
\]
is lower semi-continuous, where $\mfA _p$ is the restriction of $\mfA$ to $X \times \{ p \}$. 
\end{itemize}

Especially, Conjecture \ref{conj:LSC} holds for the variety $X$. 
\end{thm}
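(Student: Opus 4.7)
The plan is to lift the semi-continuity problem through the crepant resolution $f : \mcX \to X$ to the smooth Deligne-Mumford stack $\mcX$, prove LSC there by adapting the jet-scheme argument of Ein-Mustaţă-Yasuda \cite{EMY} via Yasuda's theory of twisted jet stacks, and then push the conclusion back down to $X$ using properness of $f$.

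The first step is to record the crepant transfer of log discrepancies. Since $K_\mcX = f^*K_X$, every divisorial valuation $v$ with center on $X$ (equivalently, on $\mcX$) satisfies
\[
a\bigl(v;\, X, \Delta, \mfa\bigr) \;=\; a\bigl(v;\, \mcX, f^*\Delta, \mfa\mcO_\mcX\bigr),
\]
and taking infima over valuations whose center maps to $x$ yields
\[
\mld_x\bigl(X,\Delta,\mfa\bigr) \;=\; \min_{y \in f^{-1}(x)} \mld_y\bigl(\mcX, f^*\Delta, \mfa\mcO_\mcX\bigr).
\]
Because $f$ is proper, the map $|f| : |\mcX| \to |X|$ is closed, so the lower semi-continuity of $\mld$ on $|\mcX|$ (respectively of $\mld_y$ in the parameter $p \in |T|$) implies the corresponding statement on $|X|$. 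Both parts (i) and (ii) are thereby reduced to the case of a smooth DM stack.

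On the stacky side, the strategy is to work in the twisted jet stacks $\mcJ_n\mcX$ and express each log discrepancy as
\[
a\bigl(v;\, \mcX, f^*\Delta, \mfa\mcO_\mcX\bigr)
= \codim\bigl(\mcV(v) \subset \mcJ_\infty\mcX\bigr) + \sht(v) - \ord_v\bigl(f^*\Delta\bigr) - \textstyle\sum_i r_i\,\ord_v\bigl(\mfa_i\mcO_\mcX\bigr),
\]
where $\mcV(v)$ is the contact locus attached to $v$ and $\sht(v)$ is the shift number recording the representation data of the inertia group at the relevant twisted sector. Since the contact loci stratify $\mcJ_\infty\mcX$ into constructible pieces whose codimensions are upper semi-continuous in the base-point of the arc, taking the infimum over arcs through $y$ gives a lower semi-continuous function in $y$, yielding (i). The identical argument applied on $\mcX \times T$, with the parameter $p$ tracking the ideals $\mfA_p$ and the associated contact loci varying in a flat family over $T$, gives (ii).

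The main obstacle I expect is the precise formulation of the change-of-variables formula on twisted jet stacks in sufficient generality: it must handle $\mbR$-ideals, the non-integral boundary $f^*\Delta$, and the family parameter $T$, and it must correctly book-keep the shift contribution $\sht(v)$ across the twisted sectors of $\mcJ_\infty\mcX$. Once this technical framework is in place, the semi-continuity statements fall out from standard upper semi-continuity of codimensions of constructible loci in arc stacks, together with the crepant reduction above.
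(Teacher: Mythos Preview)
Your reduction to the smooth Deligne--Mumford stack via the crepant identity $\mld_x(X,\Delta,\mfa)=\min_{y\in f^{-1}(x)}\mld_y(\mcX,f^*\Delta,\mfa\mcO_{\mcX})$ and properness of $f$ is valid, and the toolkit (Yasuda's twisted jets, shift function, motivic measure) is the same as the paper's. The paper organizes the argument differently: it proves a single family statement (Theorem~\ref{thm:LSC_general}) for a flat $X\to T$ with a fiberwise crepant DM resolution $\mcX\to X$ and a closed subset $W\subset X$ proper over $T$, then derives (i) by applying this to $p_2:X\times X\to X$ with $W$ the diagonal, and (ii) to $X\times T\to T$ with $W=\{x\}\times T$. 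This sidesteps having to define and manipulate $\mld$ on a stack separately.

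Two substantive points in your sketch are where the actual difficulty lies, and neither is ``standard'':

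\emph{Uniformity of the index set.} An infimum of lower semi-continuous functions need not be lower semi-continuous. After Lemma~\ref{lem:mld_stack} expresses $\mld$ as an infimum over $m\in\mbN^s$ of quantities of the form $d-\sum q_i m_i-\dim\int \mbL^{\mfs_{\mcX}}d\mu_{\mcX}$, one must show the infimum is attained on a finite set that can be chosen \emph{uniformly} in the parameter. The paper does this (Lemma~\ref{lem:s}) by spreading out a single log resolution over a dense open of the base and inducting on $\dim T$. Your line ``taking the infimum over arcs through $y$ gives a lower semi-continuous function'' hides exactly this step.

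\emph{Upper semi-continuity of fiber dimensions.} The map from a finite-level twisted jet stack to the base is not proper, so semi-continuity of fiber dimension does not directly give that the ``bad'' locus is closed \emph{downstairs}. The paper uses the $\mbC^*$-action on $\mcJ_n(\mcX/T)$ to show that the relevant $\mbC^*$-invariant closed subsets have closed image under the truncation $\varphi_{n,0}$ (Lemmas~\ref{lem:closed_abs} and~\ref{lem:closed}); then finiteness of $\varphi_{0,\text{b}}$, properness of $g$, and properness of $W$ over $T$ finish the job. Your appeal to ``standard upper semi-continuity of codimensions of constructible loci in arc stacks'' is precisely where this argument is needed.

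A minor point: your displayed formula $a(v;\mcX,\ldots)=\codim(\mcV(v))+\sht(v)-\ldots$ is not the shape the computation takes. The shift is an integrand weight $\mbL^{\mfs_{\mcX}}$ on $\mcJ_\infty\mcX$, constant on each connected component of $\mcJ_0\mcX$ (Lemma~\ref{lem:shift}); the argument proceeds by fixing a multi-index $m$ and a component $\mcV$ of the inertia stack, not by attaching a shift number to individual divisorial valuations.
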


Assume that a finite group $G$ acts on a smooth variety $M$, and that this action is free in codimension $1$. 
Then the quotient variety $M/G$ and the quotient stack $[M/G]$ are isomorphic in codimension $1$. 
Hence, $M/G$ has a crepant resolution $[M/G] \to M/G$. 
Therefore we get the following corollary. 

\begin{cor}\label{cor:quote}
Conjecture \ref{conj:LSC} holds if $X$ has only quotient singularities. 
\end{cor}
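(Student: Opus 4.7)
The plan is to reduce Corollary \ref{cor:quote} to Theorem \ref{thm:main1} by exhibiting a crepant resolution of $X$ in the category of Deligne-Mumford stacks. The paragraph just before the corollary already does exactly this in the globally-uniform case $X = M/G$ (with $G$ acting on a smooth $M$ freely in codimension one), so what remains is to handle a variety which is only \emph{étale-locally} a quotient.

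First I would note that the LSC property is étale-local on $|X|$: to show that $\{x\in|X| : \mld_x(X,\Delta)>c\}$ is open for every real $c$, it suffices to check openness after pulling back along an étale cover of $X$, since $\mld$ is preserved by étale base change. Since $X$ has only quotient singularities, at each closed point $x_0 \in X$ one can find an étale neighborhood $U\to X$ with $U \cong M/G$ for some smooth variety $M$ and some finite group $G$. By the Chevalley--Shephard--Todd theorem, after replacing $G$ by its quotient by the subgroup generated by pseudo-reflections (and $M$ by the corresponding quotient, which is still smooth), we may arrange that $G$ acts on $M$ freely in codimension one without changing $M/G$.

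At this point, the remark preceding the corollary yields a crepant Deligne-Mumford resolution $[M/G]\to M/G$, so Theorem \ref{thm:main1} applies to $U \cong M/G$ and gives lower semi-continuity of $y\mapsto \mld_y(U,\Delta|_U)$. Pushing this back to an open neighborhood of $x_0$ in $X$ via the étale-invariance of $\mld$ and the openness of étale maps completes the argument. Alternatively, one could glue the various local quotient stacks $[M/G]$ into a single smooth DM stack $\mcX \to X$ — the canonical stack of $X$ — and invoke Theorem \ref{thm:main1} only once; freeness in codimension one ensures that this canonical stack is well-defined up to canonical equivalence, so the gluing goes through without obstruction.

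The only real work is organizing these standard facts: étale-local nature of LSC, invariance of $\mld$ under étale morphisms, and the Chevalley--Shephard--Todd reduction to actions free in codimension one. I expect no substantive obstacle, since the content of the corollary lies entirely in Theorem \ref{thm:main1}; the corollary is then essentially a dictionary statement translating ``quotient singularities'' into the hypothesis of that theorem.
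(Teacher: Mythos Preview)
Your proposal is correct and follows the same strategy as the paper: produce a crepant Deligne--Mumford resolution from the quotient-stack description and invoke Theorem~\ref{thm:main1}. The paper's own argument is just the one paragraph preceding the corollary (there is no separate proof later), which treats the global case $X=M/G$ and leaves the passage from ``quotient singularities'' to a global crepant DM resolution implicit; your write-up simply makes that passage explicit, either by \'etale-localizing the LSC statement or by gluing to the canonical stack, both of which are standard and land on exactly the same application of Theorem~\ref{thm:main1}.
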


To prove Theorem \ref{thm:main1}, 
we employ Yasuda's theory of twisted jet stacks \cite{Yas:1}, \cite{Yas:2}. 
We take a crepant resolution $f: \mcX \to X$, and 
describe $\mld _x$ by the dimensions of twisted jet stacks on $\mcX$. 

As a corollary, we have an application to terminations. 
We can show that there is no infinite sequence of flips 
if we start from a symplectic variety with only quotient singularities. 
This is an extension of Matsushita's result \cite[Proposition 2.1, Lemma 2.1]{Mat}. 
In the statement below, we consider log minimal model programs. 
For a log pair $(X, D)$, a \textit{$(K_X + D)$-MMP} is a log minimal model program which contracts a $(K_X + D)$-negative extremal ray repeatedly.
The \textit{$(K_{X_i} + D_i)$-flip} is the flip corresponding to a flipping contraction of a $(K_{X_i} + D_i)$-negative extremal ray
(See \cite{KM} for more details). 
Note that $K_X$ is trivial if $X$ is a symplectic variety.

\begin{cor}\label{cor:symplectic}
Let $X$ be a symplectic variety with only terminal and quotient singularities, 
and $D_0$ an effective $\mbR$-Cartier divisor on $X$. 
Then, there is no infinite sequence of flips
\[
X=X_0 \overset{f_0}{\dashrightarrow} X_1 \overset{f_1}{\dashrightarrow} X_2 \overset{f_2}{\dashrightarrow} \cdots
\]
such that $f_i$ is a $(K_{X_i} + D_i)$-flip, where $D_i$ is the $\mbR$-Cartier divisor defined as $D_{i+1} := f_i(D_i)$ inductively. 

Especially, if the linear system $|D_0|$ has no fixed divisor, any $(K_X + D_0)$-MMP terminate. 
\end{cor}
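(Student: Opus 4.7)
The plan is to follow Matsushita's termination argument in \cite{Mat}, invoking Theorem \ref{thm:main1} to supply the lower semi-continuity required on every intermediate model. The first task will be to verify inductively that each $X_i$ is again a symplectic variety with terminal quotient singularities: since flips are isomorphisms in codimension one and $K_{X_i}$ remains trivial (as $K_X=0$), the symplectic form extends across each flip, and preservation of terminal quotient singularities is exactly the content of Matsushita's Proposition 2.1 in \cite{Mat}. Consequently Theorem \ref{thm:main1} will apply on every $X_i$.

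Next I would combine the classical strict monotonicity of log discrepancies along flips with our lower semi-continuity. For any divisor $E$ over $X$ whose center on $X_i$ lies in the flipping locus $W_i$, the log discrepancy satisfies $a(E; X_i, D_i)<a(E; X_{i+1}, D_{i+1})$ strictly. I would choose a closed point $y_i \in W_i^+$ in the flipped locus together with a divisor $E_i$ computing $\mld_{y_i}(X_{i+1}, D_{i+1})$ up to $\varepsilon$; automatically $c_{X_i}(E_i) \subset W_i$. Theorem \ref{thm:main1}(i) applied on $X_i$ then yields a closed point $x_i$ in $W_i$ at which $\mld_{x_i}(X_i, D_i)$ is bounded above by the log discrepancy of $E_i$ on $(X_i, D_i)$, giving
\[
\mld_{y_i}(X_{i+1}, D_{i+1}) > \mld_{x_i}(X_i, D_i).
\]

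To finish, I would argue that the mld values along the sequence $(X_i, D_i)$ lie in an ACC-subset of $[0, \dim X]$: the discrepancy contributions have denominators controlled by the uniformly bounded orders of the local isotropy groups of the quotient structure, while the coefficients of the prime components of the $D_i$ form a fixed finite set, unchanged under flips. An infinite strictly increasing bounded sequence in such a set is impossible, giving the desired contradiction and so terminating the flips. The remaining assertion about general $(K_X+D_0)$-MMPs follows from termination of flips together with the fact that divisorial contractions strictly decrease the Picard number (hence occur only finitely many times), the no-fixed-divisor hypothesis on $|D_0|$ preventing any divisorial step from contracting a component of a $D_i$. The hardest step will be the second one: extracting from lower semi-continuity a closed point $x_i \in W_i$ at which the strict inequality is genuinely witnessed requires careful coordination between $E_i$ and Theorem \ref{thm:main1}(i), and the ACC input in the final step must be made precise using the rigidity of the orbifold structure preserved through the flips.
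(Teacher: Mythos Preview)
Your overall strategy---reduce termination to LSC plus ACC for mld's following Shokurov \cite{Sho:letter}, and supply LSC on every $X_i$ via Theorem \ref{thm:main1}---matches the paper exactly. The paper does not rerun the monotonicity argument but simply quotes \cite{Sho:letter}; your second paragraph is therefore more than is needed, and as written the chain does not obviously link up across $i$ (you compare $\mld_{x_i}(X_i,D_i)$ with $\mld_{y_i}(X_{i+1},D_{i+1})$, but nothing then relates $y_i$ to $x_{i+1}$).

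The genuine gap is the ACC step. A uniform bound on isotropy orders gives a uniform Gorenstein index $N$, hence $a_E(X_i)\in\frac{1}{N}\mbZ$; but for an $\mbR$-divisor the boundary contributions $-\sum_j r_j\,\ord_E D_{i,j}$ carry no denominator control. If the $r_j$ are linearly independent over $\mbQ$, the set $\{\tfrac{a}{N}-\sum_j r_j m_j : a,m_j\in\mbZ_{\ge 0}\}\cap[0,d]$ is already dense in $[0,d]$ and fails ACC, so ``bounded denominators plus fixed coefficient set'' is not enough. The paper closes this step with two inputs you do not name. First, Namikawa's deformation result \cite{Nam} shows that each $X_i$ is connected to $X_0$ by locally trivial deformations, so the analytic singularity types---in particular the quotient structure and the isotropy orders---are literally unchanged, giving $A(X_i,\Gamma)=A(X,\Gamma)$; this is also what justifies that every $X_i$ still has only quotient singularities, which your proposal attributes too loosely to Matsushita. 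Second, Kawakita's discreteness theorem \cite{Kaw:discreteness} shows that $A(X,\Gamma)$ is finite for a fixed variety and a finite coefficient set $\Gamma$. Without Kawakita's result the ACC step cannot be completed for $\mbR$-coefficients, and without Namikawa the preservation of quotient singularities through the flips is unjustified.
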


In the latter half of this paper, we consider the following related conjecture proposed by Musta\c{t}\u{a}.

\begin{conj}[Musta\c{t}\u{a}]\label{conj:mustata}
Let $(X, \Delta)$ be a log pair, $Z$  a closed subset of $X$, and $I_Z$  its ideal sheaf. 
Let $\mfa = \prod _{i=1} ^s \mfa _i ^{r_i}$ be an $\mbR$-ideal sheaf with 
ideal sheaves $\mfa _1, \ldots , \mfa _s$ on $X$ and positive real numbers $r_1 , \ldots , r_s$. 

Then there exists a positive integer $l$ such that the following holds: 
if ideal sheaves $\mfb _1, \ldots , \mfb _s$ satisfy $\mfa _i + I_Z ^l = \mfb _i + I_Z ^l$, then 
\[
\mld _Z (X, \Delta, \mfa) = \mld _Z (X, \Delta, \mfb)
\]
holds, where we put $\mfb := \prod _{i=1} ^s \mfb _i ^{r_i}$. 
\end{conj}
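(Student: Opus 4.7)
The plan is to combine an elementary valuation-theoretic observation with the boundedness implicit in Theorem \ref{thm:main1} to extract the required integer $l$. Recall that
\[
\mld_Z(X, \Delta, \mfa) = \inf_E \Bigl\{ a_E(X, \Delta) - \sum_{i=1}^s r_i \ord_E(\mfa_i) \Bigr\},
\]
the infimum being taken over prime divisors $E$ over $X$ with $\cent_X(E) \subseteq Z$. The key elementary observation is that if $\mfa_i + I_Z^l = \mfb_i + I_Z^l$, then for every such $E$,
\[
\ord_E(\mfa_i) = \ord_E(\mfb_i) \qquad \text{whenever} \qquad \min\bigl\{\ord_E(\mfa_i),\ord_E(\mfb_i)\bigr\} < l \cdot \ord_E(I_Z);
\]
since $\cent_X(E) \subseteq Z$ forces $\ord_E(I_Z) \ge 1$, choosing $l$ sufficiently large guarantees $\ord_E(\mfa_i) = \ord_E(\mfb_i)$ on every divisor whose orders on the $\mfa_i$ stay below some prescribed threshold.

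I would next argue that only such ``small-order'' divisors matter for the infimum. Any divisor for which some $r_i \ord_E(\mfa_i)$ far exceeds $a_E(X, \Delta)$ contributes a log discrepancy far below $\mld_Z$, hence is not competitive unless $\mld_Z(X, \Delta, \mfa) = -\infty$, a case which can be treated separately. The remaining divisors, whose log discrepancies lie within a fixed interval around $\mld_Z$, should form a bounded family, and the task reduces to uniformly bounding $\ord_E(\mfa_i)$ on this family by some integer $N$. Transferring the computation to the crepant Deligne--Mumford resolution $\mcX$ provided by the hypothesis of Theorem \ref{thm:main1}, divisorial valuations over $X$ correspond to contact-locus data on the twisted jet stack of $\mcX$, and the dimension estimates underlying that theorem can be used to extract the required $N$. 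Setting $l := N + 1$ then produces Musta\c{t}\u{a}'s integer: the infima defining $\mld_Z(X, \Delta, \mfa)$ and $\mld_Z(X, \Delta, \mfb)$ agree term by term on every competitive divisor.

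The main obstacle is this boundedness step. In full generality it is essentially a boundedness-of-singularities statement in the spirit of the ACC conjecture and is not currently known. However, in the toric setting divisorial valuations over $X$ are parametrized by primitive vectors in the defining fan, with log discrepancies linear in the vector, so the competitive divisors lie in a bounded polytope and a uniform bound on $\ord_E(\mfa_i)$ follows from elementary convex-geometric considerations. This is precisely the regime in which the latter half of the paper should be able to carry the argument through, coupling the toric combinatorics with the LSC machinery of Theorem \ref{thm:main1}.
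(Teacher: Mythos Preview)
The statement is Conjecture~\ref{conj:mustata}, which the paper does \emph{not} prove in general; it establishes only the special cases of Proposition~\ref{prop:main2} and Theorem~\ref{thm:toric}. Your opening valuation-theoretic observation is correct and matches the paper's Remark~\ref{rmk:one_direction}, which yields the inequality $\mld_Z(X,\Delta,\mfa)\ge\mld_Z(X,\Delta,\mfb)$ for suitably large $l$: pick a single divisor $E$ realizing $\mld_Z(X,\Delta,\mfa)$ and choose $l$ with $\ord_E I_Z^l>\ord_E\mfa_i$ for each $i$.

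For the reverse inequality the paper's route is entirely different from yours. It never attempts to bound the divisors computing $\mld_Z(X,\Delta,\mfb)$. Instead, using the $\mbC^*$-action, it degenerates each $\mfb_i$ along $\mbA^1$ to an ideal $(\widetilde{\mfb_i})_0$ that \emph{contains} $\mfa_i$ (this is where the choice of $l$ enters: the homogeneous generators of $\mfa_i$ have degree $<l$, so they survive as initial terms of elements of $\mfb_i$). This inclusion gives $\mld_Z(X,\mfa)\le\mld_Z(X,(\widetilde{\mfb})_0)$ for free. The remaining step $\mld_Z(X,(\widetilde{\mfb})_0)\le\mld_Z(X,(\widetilde{\mfb})_1)=\mld_Z(X,\mfb)$ is a lower semi-continuity statement along the one-parameter family, obtained by rerunning the twisted-jet argument behind Theorem~\ref{thm:LSC_general}. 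The toric case (Theorem~\ref{thm:toric}) is deduced by manufacturing a suitable one-parameter subgroup of the torus so that $Z$ becomes its fixed locus, then invoking Proposition~\ref{prop:main2}.

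Your proposed toric argument has a real gap. The polytope bound you invoke controls \emph{toric} divisorial valuations via the fan, but as Remark~\ref{rmk:ideal_b2} emphasizes, the ideals $\mfb_i$ are not assumed torus invariant. Hence the divisor computing $\mld_Z(X,\Delta,\mfb)$ need not be toric, and your combinatorial bound on $\ord_E\mfa_i$ over primitive lattice vectors says nothing about it. To make your strategy work you would need, for \emph{every} $\mfb$ satisfying the congruence, a divisor $E$ with $a_E(X,\Delta,\mfb)=\mld_Z(X,\Delta,\mfb)$ and with $\ord_E\mfa_i$ bounded independently of $\mfb$; that is exactly the uniform boundedness you flag as open, and the toric structure of $\mfa$ alone does not supply it. The paper's degeneration-plus-LSC argument sidesteps this entirely by never matching divisors for $\mfa$ and $\mfb$ directly.
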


This conjecture is related to Shokurov's ACC conjecture on mld's by generic limits of ideals \cite[Remark 2.5.1]{Kaw:klt}. 
Generic limits were introduced by Koll\'ar \cite{Kol:which}, and using this method, 
de Fernex, Ein, and Musta\c{t}\u{a} proved 
Shokurov's ACC conjecture for log canonical thresholds \cite{dFEM:acc_lct}. 
The generic limit is one of constructions of a limit of a sequence of ideals. 
This idea of considering the limit of ideals originates in de Fernex and Musta\c{t}\u{a} \cite{dFM:limit_lct}, 
and they constructed a limit using non-standard analysis.

Musta\c{t}\u{a}'s conjecture is known under some conditions. 
If 
$
\mld _Z (X, \Delta, \mfa) = 0 
$ 
holds,
the conjecture is known  
by work of de Fernex, Ein, and Musta\c{t}\u{a} \cite{dFEM:acc_lct}. 
If the triple $(X, \Delta, \mfa)$ is Kawamata log terminal around $Z$, 
then the conjecture is known by work of Kawakita \cite{Kaw:klt}. 
The remaining case is 
when $(X, \Delta, \mfa)$ is log canonical around $Z$ and satisfies 
$
\mld _Z (X, \Delta, \mfa) > 0 
$. 
The conjecture in dimension $2$ was proved by Kawakita \cite{Kaw:surface}.

The second purpose of this paper is to prove Musta\c{t}\u{a}'s conjecture on varieties with a $\mbC ^*$-action. 
Let $X = \Spec A$ be an affine variety with a $\mbC ^*$-action, and $A = \bigoplus _{m \in \mbZ} A^{(m)}$ 
the induced graded ring structure. 
Then, the action of $\mbC ^*$ on $X= \Spec A$ is said to be \textit{of ray type} if 
$A^{(m)} = 0$ holds for all $m <0$ or $A^{(m)} = 0$ holds for all $m >0$. 
In the following proposition, we assume that the action is of ray type. 

\begin{prop}\label{prop:main2}
Let $X$ be a $\mbQ$-Gorenstein normal affine variety, 
$\Delta$ an effective $\mbR$-Cartier divisor, 
$\mfa = \prod _{i=1} ^s \mfa _i ^{r_i}$ be an $\mbR$-ideal sheaf on $X$, and 
$Z$ a closed subset of $X$. 
Suppose that $\mbC ^*$ acts on $X$ and assume the following conditions:

\begin{itemize}
\item The variety $X$ has a $\mbC ^*$-equivariant crepant resolution in the category of Deligne-Mumford stacks. 
\item The $\mbC ^*$-action on $X$ is of ray type.
\item All the components of $\Delta$ and $\mfa _i$ are $\mbC ^*$-invariant. 
\item $Z$ is the set of all $\mbC ^*$-fixed points in $X$. 
\end{itemize}

Then, 
Conjecture \ref{conj:mustata} holds for $(X, \Delta, \mfa)$ and $Z$. 
\end{prop}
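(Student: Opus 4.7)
The plan is to prove $\mld_Z(X, \Delta, \mfa) = \mld_Z(X, \Delta, \mfb)$ as two separate inequalities, where $\mfa = \prod_i \mfa_i^{r_i}$ and $\mfb = \prod_i \mfb_i^{r_i}$, exploiting the ray-type $\mbC^*$-action in complementary ways.

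For the inequality $\mld_Z(X, \Delta, \mfa) \le \mld_Z(X, \Delta, \mfb)$, the key tool is Theorem~\ref{thm:main1}(ii) applied to the Gr\"obner-type degeneration of $\mfb$ under the $\mbC^*$-scaling. The ray-type assumption ensures that the action extends to a morphism $\mbA^1 \times X \to X$, giving rise to a flat family $\mfB$ of $\mbR$-ideals on $X \times \mbA^1_t$ whose fiber over $t \ne 0$ is $\sigma_t^*\mfb$ and whose fiber over $t = 0$ is the initial (graded) ideal $\operatorname{in}(\mfb) := \prod_i \operatorname{in}(\mfb_i)^{r_i}$. Since $\sigma_t$ is an automorphism fixing $Z$ pointwise and preserving $\Delta$, the mld $\mld_Z(X, \Delta, \sigma_t^*\mfb)$ is constant in $t \in \mbC^*$, so Theorem~\ref{thm:main1}(ii) gives $\mld_Z(X, \Delta, \operatorname{in}(\mfb)) \le \mld_Z(X, \Delta, \mfb)$. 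Next, choose $l$ larger than the maximum degree of some fixed homogeneous generating set of each $\mfa_i$. For any homogeneous generator $a$ of $\mfa_i$ of degree $d < l$, the congruence $\mfa_i \subseteq \mfb_i + I_Z^l$ and a decomposition $a = b + g$ ($b \in \mfb_i$, $g \in I_Z^l$) force all graded components of $b$ in degrees $< l$ other than $d$ to vanish, so $a$ is itself the initial form of $b$; thus $\mfa_i \subseteq \operatorname{in}(\mfb_i)$, and therefore $\mld_Z(\mfa) \le \mld_Z(\operatorname{in}(\mfb)) \le \mld_Z(\mfb)$.

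For the reverse inequality $\mld_Z(\mfb) \le \mld_Z(\mfa)$, the strategy is to exhibit a $\mbC^*$-equivariant divisor $E$ over $X$ with center contained in $Z$ and $a_E(X, \Delta, \mfa) = \mld_Z(X, \Delta, \mfa)$. Granting such $E$, enlarge $l$ so that $l \cdot \ord_E(I_Z) > \ord_E(\mfa_i)$ for every $i$; this is possible because each $\ord_E(\mfa_i)$ is finite. For any $b \in \mfb_i$ decomposed as $b = a + g$ with $a \in \mfa_i, g \in I_Z^l$,
\[
\ord_E(b) \ge \min \bigl( \ord_E(\mfa_i), \, l \cdot \ord_E(I_Z) \bigr) = \ord_E(\mfa_i),
\]
so $\ord_E(\mfb_i) \ge \ord_E(\mfa_i)$, and the symmetric application using $\mfb_i \subseteq \mfa_i + I_Z^l$ yields the reverse. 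Hence $a_E(\mfb) = a_E(\mfa) = \mld_Z(\mfa)$, so $\mld_Z(\mfb) \le a_E(\mfb) = \mld_Z(\mfa)$.

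The main obstacle is proving the existence of a $\mbC^*$-equivariant divisor $E$ realizing $\mld_Z(X, \Delta, \mfa)$. The natural strategy is to start with any divisor $E_0$ realizing the minimum, produced via the twisted-jet-stack description of mld underlying the proof of Theorem~\ref{thm:main1}, and form the limit of the orbit $\{\sigma_t^*E_0\}_{t \in \mbC^*}$ as $t \to 0$; since every member of the orbit has the same log discrepancy as $E_0$, the lower semi-continuity of log discrepancy together with the minimality of $\mld_Z(\mfa)$ would force equality in the limit. The delicate point is verifying that this limit remains a divisorial valuation rather than degenerating to a higher-rank or composite valuation. Here the $\mbC^*$-equivariant crepant resolution $\mcX \to X$ is essential: it provides a smooth Deligne-Mumford stack on which $\mbC^*$-equivariant divisors arise naturally from invariant blowups, allowing the limit of $\sigma_t^*E_0$ to be controlled within a toric-type framework on $\mcX$.
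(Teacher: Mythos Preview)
Your overall architecture matches the paper's: handle $\mld_Z(\mfb)\le\mld_Z(\mfa)$ by a direct valuation argument, and $\mld_Z(\mfa)\le\mld_Z(\mfb)$ by degenerating $\mfb$ under the $\mbC^*$-action to an initial ideal containing $\mfa$. Two points need correction.

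\textbf{The inequality $\mld_Z(\mfb)\le\mld_Z(\mfa)$.} You introduce an unnecessary obstacle by demanding that the divisor $E$ computing $\mld_Z(\mfa)$ be $\mbC^*$-equivariant, and then cannot verify that the limit of $\sigma_t^*E_0$ stays divisorial. But look at your own computation after ``Granting such $E$'': nothing there uses equivariance. Any divisor $E$ with $\cent_X(E)\subset Z$ and $a_E(\mfa)=\mld_Z(\mfa)$ (or, if the mld is $-\infty$, any $E$ with $a_E(\mfa)<0$) suffices; such $E$ exists by Remark~\ref{rmk:attain}. This is exactly the content of the paper's Remark~\ref{rmk:one_direction}, and it disposes of this direction in a few lines with no limiting procedure.

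\textbf{The inequality $\mld_Z(\mfa)\le\mld_Z(\mfb)$.} Your degeneration and the verification $\mfa_i\subset\operatorname{in}(\mfb_i)$ are the same as the paper's. The gap is the appeal to Theorem~\ref{thm:main1}(ii): that statement is about $p\mapsto\mld_x(X,\Delta,\mfA_p)$ for a \emph{single closed point} $x$, whereas here $Z$ may be positive-dimensional and you need lower semi-continuity of $p\mapsto\mld_Z(X,\Delta,\mfA_p)$. The paper flags this explicitly in the introduction, and the general form Theorem~\ref{thm:LSC_general} does not apply either, since $W=Z\times\mbA^1$ is not proper over $\mbA^1$ ($X$ is affine). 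The paper's fix is to rerun the twisted-jet argument from the proof of Theorem~\ref{thm:LSC_general}: one still obtains a closed set $S_{\ge n}\subset|\mcJ_{m'}(\mcX\times\mbA^1/\mbA^1)|$ whose image in $|X\times\mbA^1|$ is closed and contained in $|Z\times\mbA^1|$; the missing properness is then replaced by the observation that the family $\widetilde{\mfb_i}$ is \emph{isotrivial} over $\mbC^*$ via the automorphisms $\phi_\gamma$, and these automorphisms fix $Z$ pointwise, so the image of $S_{\ge n}$ in $|\mbA^1|$ is forced to be $\emptyset$, $\{0\}$, or all of $\mbA^1$. This is where the specific structure of the degeneration---not just abstract semi-continuity---is used.
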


\begin{rmk}\label{rmk:ideal_b}
In the above proposition, we assume that the ideal $\mfa _i$ is $\mbC ^*$-invariant, but 
the ideal $\mfb _i$ is not necessarily $\mbC ^*$-invariant. 
\end{rmk}

As an application, we can prove Musta\c{t}\u{a}'s conjecture for toric varieties. 

\begin{thm}\label{thm:toric}
Let $X$ be a normal toric variety, $(X, \Delta)$ a log pair, and  
$Z$ a closed subset of $X$. 
Assume that $\Delta$ and $Z$ are torus invariant. 
Then, for an $\mbR$-ideal sheaf $\mfa = \prod _{i=1} ^s \mfa _i ^{r_i}$ with torus invariant ideal sheaves $\mfa _i$, 
Conjecture \ref{conj:mustata} holds 
for $(X,\Delta, \mfa)$ and $Z$.
\end{thm}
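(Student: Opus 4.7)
The plan is to reduce Conjecture~\ref{conj:mustata} for the toric data to Proposition~\ref{prop:main2}, applied to a one-parameter subgroup whose $\mbC^*$-fixed locus is the relevant orbit closure. Since $\mld_Z$ depends only on the local behaviour of $(X,\Delta,\mfa)$ along $Z$ and the hypothesis $\mfa_i + I_Z^l = \mfb_i + I_Z^l$ is local on $X$, I cover $X$ by torus-invariant affine opens $U_{\sigma_j}$ and prove the conjecture on each chart; the maximum of the resulting integers $l_j$ then yields a uniform $l$ on $X$. Because $(X,\Delta)$ is only assumed to be a log pair, $X$ itself may fail to be $\mbQ$-Gorenstein, so I first pass to a small toric $\mbQ$-factorialization obtained by refining each non-simplicial cone into simplicial cones without introducing new rays. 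Such a morphism is an isomorphism in codimension one, preserves the discrepancy of every divisor, and leaves the mld computation unchanged, so I may assume $X$ is $\mbQ$-factorial and each maximal cone $\sigma$ is simplicial.

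On such a chart $U_\sigma=\Spec\mbC[\sigma^\vee\cap M]$, the torus-invariant set $Z\cap U_\sigma$ decomposes as a finite union of orbit closures $V(\tau_1)\cup\cdots\cup V(\tau_k)$. Since $\mld_Z=\min_i\mld_{V(\tau_i)}$ and $I_Z\subseteq I_{V(\tau_i)}$, any integer $l_i$ working for $V(\tau_i)$ also works for $Z$ after taking $l=\max_i l_i$, so I reduce to a single orbit closure $Z=V(\tau)$. Pick a lattice point $u\in N$ in the relative interior of $\tau$ and let $\lambda_u\colon\mbC^*\to T$ be the associated one-parameter subgroup. The induced action on $\mbC[\sigma^\vee\cap M]$ has non-negative weights $\langle m,u\rangle$ (since $u\in\sigma$), hence is of ray type; and the standard characterisation $\tau=\sigma\cap u^\perp$ of faces identifies the $\mbC^*$-fixed locus with $V(\tau)=Z$. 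Torus-invariance of $\Delta$ and of each $\mfa_i$ automatically entails their $\mbC^*$-invariance.

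It remains to produce a $\mbC^*$-equivariant crepant resolution of $U_\sigma$ in the Deligne-Mumford category. Because $\sigma$ is simplicial, $U_\sigma$ has only quotient singularities and admits the standard toric-stack presentation $[\mbA^d/G]\to U_\sigma$, with $G$ the Cartier dual of the finite cokernel of the map $\tilde N=\bigoplus_i\mbZ e_i\to N$, $e_i\mapsto v_i$, where $v_i$ is the primitive generator of the $i$-th ray of $\sigma$. This presentation is smooth, crepant, and $T$-equivariant; after replacing $u$ by a positive integer multiple so that it lifts through the injection $\tilde N\hookrightarrow N$ (a replacement that affects neither the fixed locus nor the ray-type property), the resolution becomes $\lambda_u(\mbC^*)$-equivariant. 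All hypotheses of Proposition~\ref{prop:main2} are then satisfied, and it delivers the required integer $l$.

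The step I expect to demand the most care is verifying compatibility across the small $\mbQ$-factorialization: for ideals $\mfb_i$ on $X$ satisfying $\mfa_i+I_Z^l=\mfb_i+I_Z^l$, one must pull them back to the $\mbQ$-factorial model, check that the congruence persists for $\mfb_i\mcO_Y$ modulo the ideal of $f^{-1}(Z)$, and transport the resulting equality of mld's back to $X$ via the smallness of $f$. Once this bookkeeping is handled, combining Proposition~\ref{prop:main2} on each affine chart with the two reductions above closes the argument.
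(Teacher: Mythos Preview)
Your proposal is correct and follows essentially the same route as the paper's proof: reduce to an affine $\mbQ$-factorial toric chart (the paper invokes Remark~\ref{rmk:property} and Remark~\ref{rmk:modification} for these reductions), reduce $Z$ to a single torus orbit closure, choose a one-parameter subgroup in the relative interior of the corresponding cone so that the induced $\mbC^*$-action is of ray type with fixed locus $Z$, and apply Proposition~\ref{prop:main2}. Your extra care about lifting $u$ to $\tilde N$ is harmless but unnecessary, since the canonical toric stack $[\mbA^d/G]$ already carries a $T$-action making the map to $U_\sigma$ equivariant; and the compatibility across the $\mbQ$-factorialization that you flag as the delicate step is exactly what Remark~\ref{rmk:modification} records.
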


\begin{rmk}\label{rmk:ideal_b2}
As Remark \ref{rmk:ideal_b}, in the above theorem, we assume that the ideal $\mfa _i$ is torus invariant, but 
the ideal $\mfb _i$ in Conjecture \ref{conj:mustata} is not necessarily torus invariant. 
Therefore the theorem cannot follow directly from a combinatorial description 
of mld's for toric triples. 
\end{rmk}

In the proof of Proposition \ref{prop:main2}, showing the inequality 
\[
\mld _Z (X, \Delta, \mfa) 
\le \mld _Z (X, \Delta, \mfb)
\]
is essential (see Remark \ref{rmk:one_direction}). 
To prove this inequality, we consider a degeneration of the ideal $\mfb$. 
We explain the idea of the proof below. 
For the sake of simplicity, we assume $s = 1$ 
and denote $\mfa ' := \mfa _1$, $\mfb ' := \mfb _1$, and $r:=r_1$. 
When $l$ is sufficiently large, we can degenerate $\mfb '$ to some ideal $\mfb ' _0$ which contains $\mfa '$. 
Hence we get $\mld _Z (X, \Delta, (\mfa ') ^r) \le \mld _Z (X, \Delta, (\mfb ' _0) ^r)$, and 
the proposition reduces to the inequality 
$\mld _Z (X, \Delta, (\mfb ' _0) ^r) \le \mld _Z (X, \Delta, (\mfb ') ^r)$. 
This inequality follows from the semi-continuity property like Theorem \ref{thm:main1} (ii). 
However, the above inequality does not directly follow from Theorem \ref{thm:main1} (ii). 
It is because 
$Z$ has possibly positive dimension. Hence, we need to look the construction of the above degeneration.

The paper is organized as follows. 
In section 2, we review the definition of mld's and the theory of Yasuda's twisted jet stacks. 
Some lemmas necessary for the proof of Theorem \ref{thm:main1} are also proved in section 2. 
In section 3, we prove Theorem \ref{thm:main1}. 
In section 4, we prove Proposition \ref{prop:main2} and Theorem \ref{thm:toric}. 

\subsection*{Notation and convention}
Throughout this paper, we work over the field of complex numbers $\mbC$. 

\begin{itemize}
\item We denote by $\mbN$ the set of all non-negative integers. 
\item Every Deligne-Mumford stack in this paper is separated. 
\item For a Deligne-Mumford stack $\mcX$, we denote by $|\mcX|$ the set of all $\mbC$-valued points. 
\item For a morphism $f : \mcX \to T$ from a Deligne-Mumford stack to a variety $T$, 
and a closed point $p \in |T|$, 
we denote by $\mcX _p$ the fiber of $f$ over $p$. 
For an ideal sheaf $\mfa$ on $\mcX$, we denote by $\mfa _p$ the restriction to the fiber $\mcX _p$. 
In addition, for a morphism $g: \mcX \to \mcY$ between Deligne-Mumford stacks over $T$, 
we denote by $g_p: \mcX _p \to \mcY _p$ the induced morphism between the fibers. 
\end{itemize}

\section{Preliminaries}
\subsection{Minimal log discrepancies}\label{subsec:mld}

We recall the notations in the theory of singularities in the minimal model program. 

A \textit{log pair} $(X, \Delta)$ is a normal variety $X$ and an effective $\mbR$-divisor $\Delta$ such that $K_X + \Delta$ is 
$\mbR$-Cartier. 
An \textit{$\mbR$-ideal sheaf} of $X$ is a formal product $\mfa _1 ^{r_1} \cdots \mfa _s ^{r_s}$, 
where $\mfa _1 , \ldots , \mfa _s $ are ideal sheaves on $X$ and $r_1 , \ldots , r_s$ are positive real numbers. 
For a log pair $(X, \Delta)$ and an $\mbR$-ideal sheaf $\mfa$, we call $(X, \Delta, \mfa)$ a \textit{log triple}. 
When $\Delta = 0$, we sometimes drop $\Delta$ and write $(X, \mfa)$. 
If $Y_i$ is the closed subscheme of $X$ corresponding to $\mfa _i$, 
we sometimes identify the triple 
$(X, \Delta, \sum _{i=1} ^s r_i Y_i)$ with the triple $(X, \Delta, \prod _{i=1} ^s \mfa _i ^{r_i})$. 

For a proper birational morphism $f: X' \to X$ from a normal variety $X'$ 
and a prime divisor $E$ on $X'$, the \textit{log discrepancy} of $(X, \Delta, \mfa)$ 
at $E$ is defined as
\[
a_E (X, \Delta, \mfa) := 1 + \ord _E (K_{X'} - f^* (K_X + \Delta)) -\ord _E \mfa, 
\]
where $\ord _E \mfa := \sum _{i=1} ^s r_i \ord _E \mfa _i$. 
The image $f(E)$ is called the \textit{center of $E$ on $X$}, and we denote it by $\cent _X (E)$. 
For a closed subset $Z$ of $X$, the \textit{minimal log discrepancy} (mld for short) over $Z$ is 
defined as 
\[
\mld _Z (X, \Delta, \mfa) := \inf _{\cent _X(E) \subset Z} a_E (X, \Delta, \mfa). 
\]
In the above definition, 
the infimum is taken over all prime divisors $E$ on $X'$ with the center $\cent _X (E) \subset Z$, 
where $X'$ is a higher birational model of $X$, that is, 
$X'$ is the source of some proper birational morphism $X' \to X$. 

\begin{rmk}\label{rmk:attain}
It is known that $\mld _Z (X, \Delta, \mfa)$ is in $\mbR _{\ge 0} \cup \{ - \infty \}$ and that
if $\mld _Z (X, \Delta, \mfa) \ge 0$, then the infimum on right hand side in the definition is actually 
the minimum. 
\end{rmk}

\begin{rmk}\label{rmk:property}
Let $(X, \Delta, \mfa)$ and $Z$ be as above. Mld's have the following properties. 
\begin{enumerate}
\item[(i)] If $Z_1, \ldots , Z_c$ are the irreducible components of $Z$, then
\[
\mld _Z (X, \Delta, \mfa) = \min _{1 \le i \le c} \mld _{Z_i} (X, \Delta, \mfa).
\]

\item[(ii)] If $U_1, \ldots , U_c$ are open subsets of $X$ such that 
$Z \subset \bigcup _{j=1} ^c U_j$, then
\[
\mld _Z (X, \Delta, \mfa) = \min _{1 \le j \le c} \mld _{Z \cap U_j} (U_j, \Delta |_{U_j}, \mfa |_{U_j}),
\]
where $\mfa |_{U_j} := \prod _{i=1} ^s (\mfa _i \mcO _{U_j}) ^{r_i}$. 
\end{enumerate}
These properties easily follow from the definition. 
\end{rmk}

\subsection{Notation and remarks on Deligne-Mumford stacks}

In this section, we review some properties of Deligne-Mumford stacks (DM stacks for short). 
In this paper, we are mainly interested in separated DM stacks of finite type over complex number $\mbC$. 

Let $\mcX$ be a DM stack of finite type over $\mbC$. 
We can consider the set of $\mbC$-points $|\mcX|$ of $\mcX$, and 
it admits a Zariski topology \cite{LM}. 
Keel and Mori \cite{KM:quot} proved that the coarse moduli space $X$ exists for $\mcX$. 
Then, the induced map $|\mcX| \to |X|$ is a homeomorphism. 

It is known that a DM stack is \'etale locally isomorphic to a quotient stack (see for instance \cite{KM:quot}). 
Let $X$ be the coarse moduli space of $\mcX$. 
Then, there exists an \'etale covering $\{ X_i \to X \}_i$ such that for every $i$, 
the \'etale pullback $\mcX \times _{X} X_i$ is isomorphic to a quotient stack $[M_i / G_i]$, 
where $M_i$ is a variety over $\mbC$ and $G_i$ is a finite group. 

We prove two lemmas about DM stacks. 
The first one is about families of DM stacks. 

\begin{lem}\label{lem:fiber}
Let $M$ be a variety and $G$ a finite group. Suppose $G$ acts on $M$. 
Let $T$ be a variety and $f: [M/G] \to T$ be a morphism of stacks. 
Then, for a closed point $p \in |T|$, the fiber $[M/G]_p$ is isomorphic to the quotient stack $[M_p /G]$. 
\end{lem}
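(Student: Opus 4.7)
The plan is to identify the given morphism $f$ with a $G$-invariant morphism $M \to T$, and then to verify that forming fibers of $[M/G]$ over points of $T$ commutes with passage to the quotient stack. Since this is a formal manipulation with universal properties, I do not expect any serious obstacle; the only subtle point is the first identification.

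First I would observe that, since $T$ is a scheme (viewed as a stack with trivial $G$-action), the morphism $f : [M/G] \to T$ is equivalent, via the universal property of the quotient stack, to a $G$-invariant morphism $\tilde f : M \to T$. Concretely, $\tilde f$ is the composition of the atlas $M \to [M/G]$ with $f$, and any such invariant morphism descends to $[M/G]$ because $T$ is a sheaf in the \'etale topology.

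Next I would establish the base-change formula that for any morphism $S \to T$ of schemes there is a canonical isomorphism
\[
[M/G] \times _T S \;\cong\; \bigl[ (M \times _T S) / G \bigr],
\]
where $G$ acts on $M \times _T S$ through its action on the first factor. The verification is a direct comparison of functors of points. A $U$-point of the left hand side consists of a $G$-torsor $\pi : P \to U$, a $G$-equivariant morphism $\varphi : P \to M$, and a morphism $\alpha : U \to S$ such that $\tilde f \circ \varphi$ agrees with $P \xrightarrow{\pi} U \xrightarrow{\alpha} S \to T$. Such data is the same as a $G$-torsor $P \to U$ together with a $G$-equivariant morphism $(\varphi, \alpha \circ \pi) : P \to M \times _T S$, which is precisely a $U$-point of $[(M \times _T S)/G]$.

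Finally, applying the base-change isomorphism with $S = \Spec \mbC$ corresponding to the closed point $p \in |T|$ yields
\[
[M/G]_p \;=\; [M/G] \times _T \Spec \mbC \;\cong\; [M_p / G],
\]
as desired. The main (and only) point requiring care is the initial identification of morphisms $[M/G] \to T$ with $G$-invariant morphisms $M \to T$; everything else is a formal manipulation of fiber products and torsors.
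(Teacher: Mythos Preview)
Your argument is correct, but it proceeds along a different line from the paper's. The paper first notes, as you do, that the composite $M \to [M/G] \to T$ is $G$-invariant, so that $M_p$ is $G$-stable. It then observes that both $M_p \to [M/G]_p$ (the base change of the atlas $M \to [M/G]$) and $M_p \to [M_p/G]$ are surjective finite \'etale of degree $|G|$; since the first factors through the second, the induced morphism $[M_p/G] \to [M/G]_p$ is finite \'etale of degree $1$, hence an isomorphism. So the paper's proof is a degree-counting argument on atlases, while yours is a direct functor-of-points verification of the general base-change identity $[M/G]\times_T S \cong [(M\times_T S)/G]$. Your route is slightly longer to write out but is more transparent and yields the stronger statement for arbitrary $S\to T$, not only closed points; the paper's route is quicker once one is willing to invoke degrees of finite \'etale maps between DM stacks.
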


\begin{proof}
First, we remark that $M_p$ is $G$-invariant because the morphism $M \to T$ factors through the quotient stack $[M/G]$. 

Let $n$ be the order of the group $G$. 
It follows that both of the morphisms $M_p \to [M/G]_p$ and $M_p \to [M_p / G]$ are 
surjective finite \'etale morphisms of degree $n$. 
Since $M_p \to [M/G]_p$ factors through $M_p \to [M_p / G]$, 
we can conclude that $[M/G]_p$ is isomorphic to $[M_p / G]$. 
\end{proof}

The second lemma is about semi-continuity of the dimensions of fibers on a family of DM stacks. 

\begin{lem}\label{lem:semiconti}
Let $\mcX$ be a DM stack of finite type over $\mbC$, and
$f: \mcX \to T$ a morphism from $\mcX$  to a variety $T$. 
Then, for every $n \in \mbN$, the set 
\[
|\mcX|_{\ge n} := \{ x \in |\mcX|  \, | \, \dim _x f^{-1} (f(x)) \ge n \}
\]
is closed in $|\mcX|$. In the above equation, we denote by $\dim _x$ 
the dimension around $x$. 
\end{lem}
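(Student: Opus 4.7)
The plan is to reduce the statement to the classical upper semi-continuity of fiber dimensions for morphisms of varieties, using the étale local structure of DM stacks. Since closedness of a subset of $|\mcX|$ can be tested étale locally on $\mcX$, and since every DM stack of finite type over $\mbC$ is étale locally isomorphic to a quotient stack $[M/G]$ for a variety $M$ and a finite group $G$ (as recalled immediately before Lemma \ref{lem:fiber}), I would first reduce to the case $\mcX = [M/G]$. After this reduction, the morphism $f$ becomes a morphism $f: [M/G] \to T$.

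Next, I would exploit the canonical étale presentation $\pi: M \to [M/G]$. This is a finite étale surjection, so the induced map $|M| \to |[M/G]|$ is continuous, open, and surjective; in particular, it is a topological quotient, so a subset $S \subset |[M/G]|$ is closed if and only if $\pi^{-1}(S) \subset |M|$ is closed. By Lemma \ref{lem:fiber}, for each $p \in |T|$ the fiber is $[M/G]_p \cong [M_p/G]$, and the restriction $\pi_p: M_p \to [M_p/G]$ is again a finite étale surjection. Since étale morphisms preserve the dimension at every point, one has $\dim_x [M/G]_p = \dim_{x'} M_p$ for every $x' \in |M|$ lying over $x \in |[M/G]|$.

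Combining these observations, the preimage of $|\mcX|_{\ge n}$ under $\pi$ is
\[
\{ x' \in |M| \mid \dim_{x'} (f \circ \pi)^{-1}(f(\pi(x'))) \ge n \},
\]
and this set is closed in $|M|$ by the classical upper semi-continuity of fiber dimensions applied to the morphism $f \circ \pi: M \to T$ of varieties. Hence $|\mcX|_{\ge n}$ is closed in $|\mcX|$.

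The only points requiring care are the behavior of pointwise dimension under étale morphisms of DM stacks and the fact that $|M| \to |[M/G]|$ is a topological quotient map; both follow from standard properties of étaleness together with the construction of the topology on the coarse moduli space (which is homeomorphic to $|\mcX|$), so I do not anticipate a substantive obstacle beyond bookkeeping.
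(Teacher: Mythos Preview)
Your argument is correct, but the paper proceeds more directly. Instead of passing to an \'etale cover and then to the atlas $M \to [M/G]$, the paper simply takes the coarse moduli space $X$ of $\mcX$: by universality, $f$ factors as $\mcX \to X \to T$, and since $|\mcX| \to |X|$ is a homeomorphism, the fiber dimension function on $|\mcX|$ is identified with the one for $X \to T$, to which the classical result for varieties (or algebraic spaces) applies in one step. Your route trades this single global reduction for two layers of \'etale localization, invoking Lemma~\ref{lem:fiber} along the way; this buys you an explicit reason (\'etaleness of $M_p \to [M/G]_p$) for why pointwise fiber dimensions are preserved, whereas the paper leaves that implicit in the homeomorphism. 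Both approaches are sound; the paper's is shorter, while yours is more self-contained in that it avoids appealing to properties of the coarse moduli map beyond what was stated earlier.
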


\begin{proof}
If $\mcX$ is a variety, then this lemma is well-known (see for instance \cite{Gortz}). 

Let $X$ be the coarse moduli space of $\mcX$. 
By the universality of the coarse moduli space, 
$f: \mcX \to T$ factors through $\mcX \to X$. 
Since the induced map $|\mcX| \to |X|$ is a homeomorphism, 
the assertion follows from the case of varieties. 
\end{proof}

\subsection{Yasuda's twisted jet stacks}\label{sec:jet_stack}
Twisted jet stacks were introduced by Yasuda \cite{Yas:1}, \cite{Yas:2}. 
First, we recall the definition. 

Let $\mcX$ be a DM stack over $\mbC$. 
For an affine scheme $S = \Spec R$ over $\mbC$ and a non-negative integer $n$, 
we denote 
\[
D_{n, S} := \Spec R[t]/(t^{n+1}).
\]
For a positive integer $l$, we denote by $\mu _l$ the cyclic group of $l$-th roots of the unity. 
We consider the natural group action $\mu _l$ on $D_{nl, S}$ induced by the following ring homomorphism:
\[
R[t]/(t^{nl+1}) \to R[t]/(t^{nl+1}) \otimes \mbC[x]/(x^l -1); \qquad t \mapsto t \otimes x. 
\]
Then, we denote
\[
\mcD _{n, S} ^l := [D _{nl, S}/ \mu _l]. 
\]

A \textit{twisted $n$-jet of order $l$} of $\mcX$ over $S$ is a representable morphism 
$\mcD _{n, S} ^l \to \mcX$. 
Yasuda defined the \textit{stack of twisted $n$-jets of order $l$}, and we denote it by $\mcJ _n ^l \mcX$. 
An object of $\mcJ _n ^l \mcX$ over $S$ is a twisted $n$-jet of order $l$. 
For a morphism $f: S \to T$ over $\mbC$, we have an induced morphism $f': \mcD _{n, S} ^l \to \mcD _{n, T} ^l$. 
A morphism in $\mcJ _n ^l \mcX$ from $\gamma : \mcD _{n, S} ^l \to \mcX$ to $\gamma ' : \mcD _{n, T} ^l \to \mcX$
is a $2$-morphism from $\gamma$ to $\gamma ' \circ f'$. 
The stack $\mcJ _n \mcX$ of \textit{twisted $n$-jets} is 
the disjoint union of the stacks $\bigsqcup _{l \ge 1} \mcJ _n ^l \mcX$.
Both $\mcJ _n ^l \mcX$ and $\mcJ _n \mcX$ are actually DM stacks \cite[Theorem 18]{Yas:2}. 
When $X$ is a scheme, $\mcJ _n ^l X = \emptyset$ holds for $l \ge 2$, and 
$\mcJ _n X$ can be identified with the usual $n$-th jet scheme $J_n X$.

For $0 \le n_1 \le n_2$, we have the \textit{truncation morphism}
$\varphi ^{\mcX} _{n_2, n_1}: \mcJ _{n_2} \mcX \to \mcJ _{n_1} \mcX$. 
This corresponds to the surjective ring homomorphism $R[t]/(t^{n_2 l +1}) \to R[t]/(t^{n_1 l +1})$. 
Since $\varphi ^{\mcX} _{n_2, n_1}$ is an affine morphism, we have a projective limit and projections
\[
\mcJ _{\infty} \mcX := \lim _{\leftarrow} \mcJ _{n} \mcX, \qquad 
\varphi ^{\mcX} _{\infty, n}: \mcJ _{\infty} \mcX \to \mcJ _{n} \mcX. 
\]

The stack $\mcJ _{0} \mcX$ can be identified with the inertia stack and the projection
\[
\varphi ^{\mcX}_{0, \text{b}}: \mcJ _{0} \mcX \to \mcX
\]
is a finite morphism. 
Here, we use ``$\text{b}$'' in the index as an abbreviation of the word ``base''. 
We denote by $\varphi ^{\mcX} _{n, \text{b}}$ the composite map 
\[
\mcJ _{n} \mcX \xrightarrow{\varphi ^{\mcX} _{n, 0}} 
\mcJ _{0} \mcX 
\xrightarrow{\varphi ^{\mcX} _{0, \text{b}}} \mcX
\]
for $n \in \mbN \cup \{ \infty \}$.

In the theory of jet schemes, the $\mbC ^*$-action and the relative jet schemes
can be defined \cite[Section 2]{Mus:jet}. 
In the following part of this section, we generalize these concepts to twisted jet stacks. 

We already defined the truncation morphism 
$\varphi ^{\mcX} _{n, 0}: \mcJ _{n} \mcX \to \mcJ _{0} \mcX$. 
On the other hand, we also have the \textit{zero section} 
$\sigma_{n} ^{\mcX} : \mcJ _{0} \mcX \to \mcJ _{n} \mcX$. 
This is defined by the composition with the stack morphism $\mcD _{n, S} ^l \to \mcD _{0, S} ^l$
induced by the ring inclusion $R \hookrightarrow R[t]/(t^{nl+1})$. 
We will write simply $\varphi _{m,n}$ (resp. $\sigma _{n}$) 
instead of $\varphi ^{\mcX} _{m,n}$ (resp. $\sigma ^{\mcX} _{n}$) 
when no confusion can arise. 

The twisted jet stack $\mcJ _{n} \mcX$ admits the $\mbC ^*$-action which is induced by the following $\mbC ^*$-action on $R[t]/(t^{nl+1})$: 
\[
R[t]/(t^{nl+1}) \to R[t]/(t^{nl+1}) \otimes \mbC [s, s^{-1}]; \qquad 
t \to t \otimes s. 
\]
The following lemma is used in the proof of Theorem \ref{thm:main1}. 

\begin{lem}\label{lem:closed_abs}
Let $W$ be a $\mbC ^*$-invariant closed subset of $|\mcJ _{n} \mcX|$. 
Then $\varphi _{n,0} (W)$ is a closed subset of $|\mcJ _{0} \mcX|$. 
\end{lem}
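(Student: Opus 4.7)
The plan is to show the set-theoretic identity
\[
\varphi_{n,0}(W) = \sigma_n^{-1}(W),
\]
from which the lemma follows immediately: since $\sigma_n : \mcJ_0 \mcX \to \mcJ_n \mcX$ is a morphism of stacks it is continuous on coarse spaces, so $\sigma_n^{-1}(W)$ is closed in $|\mcJ_0\mcX|$.

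One inclusion is formal. If $y \in \sigma_n^{-1}(W)$, then $\sigma_n(y) \in W$ and $\varphi_{n,0}(\sigma_n(y)) = y$ (because $\sigma_n$ is a section of $\varphi_{n,0}$, as is clear on the ring level: the composition $R \hookrightarrow R[t]/(t^{nl+1}) \twoheadrightarrow R$ is the identity). Hence $y \in \varphi_{n,0}(W)$.

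The substantial direction is $\varphi_{n,0}(W) \subset \sigma_n^{-1}(W)$, and this is where the $\mbC^*$-invariance enters. The key observation is that the $\mbC^*$-action on $\mcJ_n \mcX$ extends to an $\mbA^1$-action, with the value at $0$ equal to $\sigma_n \circ \varphi_{n,0}$. Indeed, the ring map $R[t]/(t^{nl+1}) \to R[t]/(t^{nl+1}) \otimes \mbC[s]$ sending $t \mapsto t \otimes s$ is well-defined for all $s$, and at $s=0$ it factors as $R[t]/(t^{nl+1}) \twoheadrightarrow R \hookrightarrow R[t]/(t^{nl+1})$, that is, as the composition corresponding to $\varphi_{n,0}$ followed by $\sigma_n$. (One must check that this is also compatible with the $\mu_l$-quotients defining the stacks $\mcD_{n,S}^l$ and $\mcD_{0,S}^l$, which is straightforward.)

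Granting this, the argument concludes as follows. Given $\gamma \in W$, the extended action yields a morphism $\alpha_\gamma : \mbA^1 \to \mcJ_n \mcX$ with $\alpha_\gamma(s) = s \cdot \gamma$ for $s \neq 0$ and $\alpha_\gamma(0) = \sigma_n(\varphi_{n,0}(\gamma))$. By $\mbC^*$-invariance of $W$, the preimage $\alpha_\gamma^{-1}(W)$ contains $\mbC^* \subset \mbA^1$; since $W$ is closed and $\alpha_\gamma$ is continuous, $\alpha_\gamma^{-1}(W) = \mbA^1$, so $\alpha_\gamma(0) \in W$, i.e.\ $\varphi_{n,0}(\gamma) \in \sigma_n^{-1}(W)$, as required.

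The only real obstacle is checking that the $\mbC^*$-action genuinely extends to $\mbA^1$ on the stack level and that the fiber over $0$ is identified with $\sigma_n \circ \varphi_{n,0}$; once this is verified on the presenting ring maps $R[t]/(t^{nl+1}) \to R[t]/(t^{nl+1}) \otimes \mbC[s]$, everything else is formal.
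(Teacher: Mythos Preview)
Your proof is correct and follows essentially the same strategy as the paper: extend the $\mbC^*$-action on $\mcJ_n\mcX$ to a morphism from $\mbA^1 \times \mcJ_n\mcX$, observe that the fiber over $0$ is $\sigma_n \circ \varphi_{n,0}$, and use $\mbC^*$-invariance together with closedness of $W$ to deduce $\varphi_{n,0}(W) = \sigma_n^{-1}(W)$. Your version is slightly more explicit in that you verify the easy inclusion $\sigma_n^{-1}(W) \subset \varphi_{n,0}(W)$ separately, whereas the paper leaves this implicit.
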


\begin{proof}
The $\mbC ^*$-action on $\mcJ _{n} \mcX$ is induced by the action $\mbC ^* \times \mcD _{n, S} ^l \to \mcD _{n, S} ^l$. 
By the definition above, the morphism $\mbC ^* \times \mcD _{n, S} ^l \to \mcD _{n, S} ^l$ 
is uniquely extended to the morphism
$
\mbC \times \mcD _{n, S} ^l \to \mcD _{n, S} ^l
$. 
Therefore, the $\mbC ^*$-action $\mbC ^* \times \mcJ _{n} \mcX \to \mcJ _{n} \mcX$ is extended to the morphism
\[
\psi:\mbC \times \mcJ _{n} \mcX \to \mcJ _{n} \mcX. 
\]
By definition, 
for any $\alpha \in |\mcJ _{n} \mcX|$, 
we have $\psi (0, \alpha) = \sigma _{n}(\varphi _{n,0} (\alpha))$. 

It is sufficient to show the equality $\varphi _{n,0} (W) = \sigma _{n} ^{-1} (W)$ 
because $\sigma _{n} ^{-1} (W)$ is closed.
Fix an element $\alpha \in W$. 
Since $W$ is $\mbC ^*$-invariant, $\psi (\gamma, \alpha) \in W$ for any $\gamma \in \mbC ^*$. 
Because $W$ is closed, $\psi (0, \alpha) \in W$ holds. 
Since $\psi (0, \alpha) = \sigma _{n} (\varphi _{n,0} (\alpha))$, 
we have the equality $\varphi _{n,0} (W) = \sigma _{n} ^{-1} (W)$. 
\end{proof}

Next, we construct the \textit{relative twisted jet stacks}.
\begin{lem}\label{lem:relative}
Let $f: \mcY \to T$ be a morphism from a DM stack $\mcY$ to a variety $T$. 
Then there exist a DM stack $\mcJ _{n} (\mcY/T)$ and a morphism $g: \mcJ _{n} (\mcY/T) \to T$ such that 
for any point $p \in |T|$, 
\[
\mcJ _{n} (\mcY/T)_p \cong \mcJ _{n} (\mcY _p)
\]
holds, where $\mcJ _{n} (\mcY/T)_p$ is the fiber of $g$ over $p$, and $\mcY _p$ is the fiber of $f$ over $p$. 
\end{lem}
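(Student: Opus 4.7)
The plan is to construct $\mcJ_{n}(\mcY/T)$ as a suitable fiber product. Since $T$ is a scheme, no nontrivial twists occur over $T$, so the twisted jet stack $\mcJ_{n}T$ coincides with the ordinary $n$-th jet scheme $J_{n}T$, and we have the zero section $\sigma^{T}_{n} : T \to J_{n}T$ already introduced in section \ref{sec:jet_stack}. The morphism $f : \mcY \to T$ induces by functoriality a morphism $\mcJ_{n}(f) : \mcJ_{n}\mcY \to \mcJ_{n}T = J_{n}T$ sending a representable morphism $\mcD^{l}_{n,S}\to \mcY$ to its composite with $f$. I would then define
\[
\mcJ_{n}(\mcY/T) := \mcJ_{n}\mcY \times_{J_{n}T} T,
\]
where the map $T \to J_{n}T$ is the zero section $\sigma^{T}_{n}$, and take $g : \mcJ_{n}(\mcY/T) \to T$ to be the second projection. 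Since $J_{n}T$ is a scheme and $\mcJ_{n}\mcY$ is a DM stack, this fiber product is again a DM stack.

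To identify the fiber, I would compute
\[
\mcJ_{n}(\mcY/T)_{p} \;=\; \mcJ_{n}\mcY \times_{J_{n}T} \{p\},
\]
where $\{p\} \to J_{n}T$ is the zero jet of $T$ at $p$. By definition, an object of this fiber over $\Spec \mbC$ is a representable morphism $\gamma : \mcD^{l}_{n,\Spec \mbC} \to \mcY$ together with a $2$-isomorphism of $f \circ \gamma$ with the constant morphism at $p$. By the universal property of $\mcY_{p} = \mcY \times_{T} \{p\}$, such data is exactly the same as a morphism $\mcD^{l}_{n,\Spec \mbC} \to \mcY_{p}$. Representability is preserved under this identification because $\mcY_{p} \to \mcY$ is representable (as a base change of $\{p\}\to T$), so these morphisms are precisely the twisted $n$-jets of $\mcY_{p}$, i.e., the objects of $\mcJ_{n}(\mcY_{p})$. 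The same identification works over an arbitrary test scheme $S$ with a morphism to $\{p\}$, which shows the two stacks are equivalent.

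The main subtlety I would need to verify carefully is the representability claim at the level of test objects: given a representable morphism $\gamma$ to $\mcY$ whose composite with $f$ is the constant map at $p$, the induced morphism to $\mcY_{p}$ must be representable, and conversely. This reduces to the fact that the inclusion $\mcY_{p} \hookrightarrow \mcY$ is representable and that representability is stable under base change and composition. Everything else is a direct unwinding of the fiber-product definition, so this compatibility check is really the only step requiring care.
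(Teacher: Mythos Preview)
Your fiber-product construction is correct and is a genuinely different route from the one taken in the paper. The paper defines $\mcJ_{n}(\mcY/T)$ directly as a category fibred over $(\mathrm{Sch}/T)$: an object over $S \to T$ is a representable morphism $\mcD^{l}_{n,S} \to \mcY$ over $T$. It then checks this is a stack by the same argument as in the absolute case, and establishes that it is a DM stack, together with the fiber identification, by passing to an \'etale cover where $\mcY \cong [M/G]$ and invoking the explicit description $\mcJ^{l}_{n}([M/G]/T) \cong \bigsqcup_{a} [J^{(a)}_{nl}(M/T)/C_{a}]$; the fiber statement then comes from the scheme-theoretic relative jet scheme and Lemma~\ref{lem:fiber}.

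Your approach bypasses all of this local work: you pull back the absolute twisted jet stack $\mcJ_{n}\mcY$ along the zero section $\sigma^{T}_{n} : T \to J_{n}T$, which immediately gives a DM stack (fiber product of a DM stack with schemes), and the fiber computation unwinds directly. Note that your fiber product is in fact the same object as the paper's categorical definition: an $S$-point of $\mcJ_{n}\mcY \times_{J_{n}T} T$ is exactly a representable $\gamma : \mcD^{l}_{n,S} \to \mcY$ together with $h : S \to T$ such that $f \circ \gamma$ factors through $h$, i.e., $\gamma$ is a morphism over $T$. Your representability check via the monomorphism $\mcY_{p} \hookrightarrow \mcY$ is fine; concretely, for any scheme $U \to \mcY_{p}$ one has $\mcD^{l}_{n,S} \times_{\mcY_{p}} U \cong \mcD^{l}_{n,S} \times_{\mcY} U$ because $\mcY_{p} \to \mcY$ is a monomorphism, so representability passes in both directions. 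The trade-off is that the paper's route, while longer, yields the explicit quotient-stack description of $\mcJ_{n}(\mcY/T)$ that is used later (for instance in the proof of Lemma~\ref{lem:shift}); your construction is cleaner for the statement at hand but you would still need that local form elsewhere.
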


For the proof of Lemma \ref{lem:relative}, we use an \'etale local description of twisted jet stacks \cite[Proposition 16]{Yas:2}. 

Let $[M/G]$ be a quotient stack with scheme $M$ and finite group $G$. 
Fix a positive integer $l$ and an embedding $a: \mu _l \hookrightarrow G$. 
We denote by $J_n M$ the $n$-th jet scheme of $M$, then $\mu _l$ acts on $J_n M$ in two ways. 
First, the action $\mu _l \curvearrowright D_{n,S}$ induces an action $\mu _l \curvearrowright J_n M$. 
On the other hand, we have the action $G \curvearrowright J_n M$, and this action induces an action
$\mu _l \curvearrowright J_n M$ by the embedding $a: \mu _l \hookrightarrow G$. 
We define $J_n ^{(a)} M$ to be the closed subscheme of $J_n M$ where the above two actions are identical. 
Then we have a concrete description of twisted jet stacks:
\[
\mcJ _{n} ^l \mcX \cong \bigsqcup _{a \in \Conj (\mu _l , G)} [J_{nl} ^{(a)} M / C_a],  
\]
where $\Conj (\mu _l , G)$ is the conjugacy classes of embeddings $\mu _l \hookrightarrow G$, 
and $C_a$ is the centralizer of $a$. 
\begin{proof}[Proof of Lemma \ref{lem:relative}]
When $\mcY = Y$ is a scheme, the \textit{relative jet scheme} $J_n (Y/T)$ exists and 
\[
J_n (Y/T)_p \cong J_n (Y_p)
\]
holds \cite[Section 2]{Mus:jet}. 
Besides, $J_n (Y/T)$ can be characterized by the following representability: 
\[
\Hom _{\text{Sch}/ T} (Z, J_n (Y/T)) \cong 
\Hom _{\text{Sch}/ T} \big( Z \times _{\mbC} \Spec ( \mbC [t]/(t^{n+1}) ) , Y \big)
\]
for every scheme $Z$ over $T$. 

First, we consider the case when $\mcY$ is a quotient stack $[M/G]$. 
Fix a positive integer $l$ and an embedding $a: \mu _l \hookrightarrow G$. 
Then, two actions $\mu _l \curvearrowright J_n (M/T)$ are 
induced by the actions $\mu _l \curvearrowright \Spec ( \mbC [t]/(t^{m+1}))$ and $\mu _l \curvearrowright M$. 
These actions are compatible with above two $\mu _l$-actions on $J_n (M_p)$ when we restrict them to the fibers $J_n (M/T)_p$. 
Set $J_n ^{(a)} (M/T)$ to be the closed subscheme of $J_n (M/T)$ where the above two actions are identical. 

We put 
\[
\mcZ ^l := \bigsqcup _{a \in \Conj (\mu _l , G)} [J_{nl} ^{(a)} (M/T) / C_a]. 
\]
Restricting this to the fiber over $p \in |T|$, we have 
\[
\mcZ ^l _p \cong \bigsqcup _{a \in \Conj (\mu _l , G)} [J_{nl} ^{(a)} (M_p) / C_a] 
\cong \mcJ _{n} ^l ([M/G]_p)
\]
by Lemma \ref{lem:fiber}. 
Hence, 
$\bigsqcup _{l \ge 1} \mcZ ^l$ satisfies the property in the statement. 

For the general case, 
we introduce the relative twisted jet stack as a category. 
We define $\mcJ ^l _{n} (\mcY/T)$ as follows. 
For a scheme $S$ over $T$, an object of $\mcJ ^l _{n} (\mcY/T)$ over $S$ is a 
representable morphism $\mcD _{n, S} ^l \to \mcY$ over $T$. 
For a morphism $f: S_1 \to S_2$ over $T$, we have an induced morphism $f': \mcD _{n, S_1} ^l \to \mcD _{n, S_2} ^l$. 
A morphism in $\mcJ ^l _{n} (\mcY/T)$ from $\gamma : \mcD _{n, S_1} ^l \to \mcY$ to $\gamma ' : \mcD _{n, S_2} ^l \to \mcY$
is a $2$-morphism from $\gamma$ to $\gamma ' \circ f'$. 
Then, the category $\mcJ ^l _{n} (\mcY/T)$ is actually a stack by the same reason as in the absolute case 
\cite[Lemma 12]{Yas:2}. 
We define $\mcJ _{n} (\mcY/T) = \bigsqcup _{l \ge 1} \mcJ ^l _{n} (\mcY/T)$. 

To complete the proof, it is sufficient to show that for a quotient stack $\mcY = [M/G]$, 
\[
\mcJ _{n} ^l ([M/G]/T) = \bigsqcup _{a \in \Conj (\mu _l , G)} [J_{nl} ^{(a)} (M/T) / C_a]
\]
holds. 
This equation can be confirmed by following the proof for the absolute case 
\cite[Proposition 16]{Yas:2}. 
Here, we use the property of the relative jet schemes about the representability as remarked above. 
\end{proof}

For the relative twisted jet stacks, we can also define the truncation morphism 
$\varphi ^{\mcY /T} _{n_2,n_1} : \mcJ _{n_2}(\mcY /T) \to \mcJ _{n_1}(\mcY /T)$ , 
the zero section $\sigma ^{\mcY /T} _n : \mcJ _{0}(\mcY /T) \to \mcJ _{n}(\mcY /T)$, and 
$\mbC ^*$-action on $\mcJ _{n}(\mcY /T)$. 
If we restrict them to the fiber over a closed point $p \in T$, 
these definitions are compatible with the definitions in the absolute case. 
We also have the following lemma. 

\begin{lem}\label{lem:closed}
Let $W$ be a $\mbC ^*$-invariant closed subset of $|\mcJ _{n} (\mcY/T)|$. 
Then $\varphi ^{\mcY/T} _{n,0} (W)$ is a closed subset of $|\mcJ _{0} (\mcY/T)|$. 
\end{lem}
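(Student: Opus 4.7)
The plan is to imitate the proof of Lemma \ref{lem:closed_abs}, exploiting the fact that the relative twisted jet stack $\mcJ_n(\mcY/T)$ was built so that the truncation morphism $\varphi^{\mcY/T}_{n,0}$, the zero section $\sigma^{\mcY/T}_n$, and the $\mbC^*$-action all come from the very same ring-theoretic operations on $R[t]/(t^{nl+1})$ used in the absolute case, now performed relatively over $T$. Thus the absolute argument should transplant with $T$ playing the role of a passive parameter.

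First, I would verify that the $\mbC^*$-action extends to a morphism $\psi : \mbC \times \mcJ_n(\mcY/T) \to \mcJ_n(\mcY/T)$ over $T$. This is automatic: the action is induced by $t \mapsto t \otimes s$ on $R[t]/(t^{nl+1})$, which makes sense for arbitrary $s \in \mbC$, and the construction is carried out relatively over $T$ in precisely the same way as in the absolute setting. Next, I would check the key identity $\psi(0,\alpha) = \sigma^{\mcY/T}_n \bigl( \varphi^{\mcY/T}_{n,0}(\alpha) \bigr)$ for every $\alpha \in |\mcJ_n(\mcY/T)|$. Setting $s=0$ in $t \mapsto t \otimes s$ gives $t \mapsto 0$, which factors as the quotient $R[t]/(t^{nl+1}) \twoheadrightarrow R$ followed by the inclusion $R \hookrightarrow R[t]/(t^{nl+1})$; geometrically, this is exactly $\sigma^{\mcY/T}_n \circ \varphi^{\mcY/T}_{n,0}$.

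Granted these two compatibilities, the lemma reduces to the equality of subsets
\[
\varphi^{\mcY/T}_{n,0}(W) = (\sigma^{\mcY/T}_n)^{-1}(W),
\]
from which closedness is immediate since $\sigma^{\mcY/T}_n$ is a morphism of stacks. The inclusion $\supseteq$ follows from $\varphi^{\mcY/T}_{n,0} \circ \sigma^{\mcY/T}_n = \mathrm{id}$. For $\subseteq$, given $\alpha \in W$, the $\mbC^*$-invariance of $W$ gives $\psi(\gamma,\alpha) \in W$ for every $\gamma \in \mbC^*$; since $W$ is closed, the limit point $\psi(0,\alpha) = \sigma^{\mcY/T}_n(\varphi^{\mcY/T}_{n,0}(\alpha))$ lies in $W$, so $\varphi^{\mcY/T}_{n,0}(\alpha) \in (\sigma^{\mcY/T}_n)^{-1}(W)$.

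I do not expect any serious obstacle: the only mild point is confirming that the extension of the $\mbC^*$-action and the identification $\psi(0,-) = \sigma^{\mcY/T}_n \circ \varphi^{\mcY/T}_{n,0}$ persist in the relative setting, which is clear from tracking the underlying ring maps through the quotient-stack description of $\mcJ^l_n(\mcY/T)$ in Lemma \ref{lem:relative}. Once these formal checks are in place, the topological argument is identical to that of Lemma \ref{lem:closed_abs}.
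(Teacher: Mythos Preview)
Your proposal is correct and follows essentially the same approach as the paper: the paper's proof simply invokes the argument of Lemma~\ref{lem:closed_abs} to obtain the equality $\varphi^{\mcY/T}_{n,0}(W) = (\sigma^{\mcY/T}_n)^{-1}(W)$ and then notes that the right-hand side is closed. You have written out in detail precisely the steps that the paper compresses into the phrase ``by the proof of Lemma~\ref{lem:closed_abs}.''
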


\begin{proof}
We have the equality
$\varphi ^{\mcY /T} _{n,0} (W) = (\sigma ^{\mcY /T} _{n}) ^{-1} (W)$
by the proof of Lemma \ref{lem:closed_abs}. 
The right hand side is clearly closed in $|\mcJ _{0} (\mcY/T)|$. 
\end{proof}

\subsection{Motivic integration}
First, we define the space in which motivic integration takes value. 
We introduce the notion of the \textit{Grothendieck semiring}, following Yasuda \cite[Section 3]{Yas:2}.

In this section, we fix a positive integer $r$. 
A \textit{convergent stack} is the pair $(\mcX, \alpha)$ of a DM stack $\mcX$ and a function
\[
\alpha: \{\text{connected component of $\mcX$} \} \to  \frac{1}{r} \mbZ
\]
satisfying the following two conditions:

\begin{itemize}
\item $\mcX$ has at most countably many connected components and the all connected components are of finite type, and 
\item for every integer $n$, there are at most finitely many connected components $\mcV$ such that 
$\dim \mcV + \alpha (\mcV) > n
$.
\end{itemize}

A DM stack $\mcX$ of finite type is identified with the convergent stack $(\mcX, 0)$. 
For two convergent stacks $(\mcX , \alpha)$ and $(\mcY, \beta)$, 
a \textit{morphism} $f: (\mcX , \alpha) \to (\mcY, \beta)$ of convergent stacks is 
a morphism $g: \mcX \to \mcY$ of stacks satisfying $\beta = \alpha \circ f$. 
We denote by $(\mfR ^{1/r}) '$ the set of the isomorphism classes of convergent stacks. 
$(\mfR ^{1/r}) '$ admits a semiring structure by the disjoint union $\sqcup$ and the product $\times$. 
For $(\mcX , \alpha) \in (\mfR ^{1/r}) '$, we can define the \textit{dimension} $\dim (\mcX , \alpha)$ by 
\[
\max \{ \dim \mcV + \alpha (\mcV) \, | \, \text{$\mcV$ is a connected component of $\mcX$} \}. 
\]
For empty set $\emptyset$, we define $\dim \emptyset := - \infty$. 

For each $n \in \mbZ$, we can define a relation $\sim _n$ on $(\mfR ^{1/r}) '$ to be the strongest equivalence relation
satisfying the following three relations:

\begin{itemize}
\item If $(\mcX, \alpha)$ and $(\mcY, \beta)$ are convergent stacks with $\dim (\mcY, \beta) < n$, 
then $(\mcX, \alpha) \sim _n (\mcX, \alpha) \sqcup (\mcY, \beta)$.
\item If $(\mcX, \alpha)$ is a convergent stack and $\mcY$ is a closed substack of $\mcX$, 
then $(\mcX, \alpha) \sim _n (\mcY, \alpha |_{\mcY}) \sqcup (\mcX \setminus \mcY , \alpha)$. 
\item Let $(\mcX, \alpha)$ and $(\mcY, \beta)$ be convergent stacks. Assume there exists a representable morphism of stacks
$f: \mcX \to \mcY$ such that 
$f^{-1}(x) \cong \mbA ^{\beta (x) - \alpha (f^{-1}(x))}$ holds for any point $x \in |\mcY|$. 
Then $(\mcX, \alpha) \sim _n (\mcY, \beta)$. 
\end{itemize}

Then we can define new relation $\sim$ on $(\mfR ^{1/r}) '$ as follows: 
For any $a, b \in (\mfR ^{1/r}) '$, $a \sim b$ if and only if $a \sim _n b$ for any integer $n$. 
For a convergent stack $(\mcX, \alpha) \in (\mfR ^{1/r}) '$, we denote by $\{ (\mcX, \alpha) \}$ 
the equivalence classes of $\mcX$. Besides, we denote by $\mfR ^{1/r}$ the set of all the equivalence classes. 
By definition, the map
\[
\dim: \mfR ^{1/r} \to \frac{1}{r}\mbZ \cup \{ - \infty \}, \qquad \{ (\mcX, \alpha) \} \mapsto \dim (\mcX, \alpha)
\]
is well defined, and a semiring structure on $\mfR ^{1/r}$ is induced by the semiring structure on $(\mfR ^{1/r}) '$. 

Next, we introduce the motivic measure on a smooth DM stack \cite[Section 2]{Yas:2}. 

Let $\mcX$ be a smooth DM stack of finite type and pure dimension $d$. 
For a non-negative integer $n$, a subset $A \subset |\mcJ _{\infty} \mcX|$ is called an \textit{$n$-cylinder} 
if $A = \varphi _{\infty, n} ^{-1} \varphi _{\infty, n} (A)$ and 
$\varphi _{\infty, n} (A)$ is a constructible subset of $|\mcJ _{n} \mcX|$. 
For an $n$-cylinder $A \subset |\mcJ _{\infty} \mcX|$ we define 
\[
\mu _{\mcX} (A):= \{ \varphi _{\infty, n} (A) \} \mbL ^{-nd} \in \mfR ^{1/r}, 
\]
where we denote $\{ \mbA ^1 \}$ by $\mbL$.
This definition is independent of $n$ \cite[Lemma 43]{Yas:2}. 
A subset $A \subset |\mcJ _{\infty} \mcX|$ is called \textit{negligible} 
if there exist cylinders $C_i$ for $i \ge 1$ such that $A = \bigcap _{i \ge 1} C_i$ and 
$\lim _{i \to \infty} \codim C_i = \infty$ hold, 
where we denote $\codim C := \codim _{|\mcJ _n \mcX|} \varphi _{\infty, n} (C)$ for an $n$-cylinder $C$. 

Let $A \subset |\mcJ _{\infty} \mcX|$ be a subset. 
A function $F : A \to \mfR ^{1/r}$ is called \textit{measurable} if
there exist a negligible subset $B$ and countably many cylinders $A_i$ such that
$A = B \sqcup \bigsqcup _{i \ge 1} A_i$ and $F$ is constant on $A_i$. 
For such $F$, we define the \textit{motivic integration} as follows:
\[
\int _{A} F d \mu _{\mcX} := \sum_{i \ge 1} F(A_i) \cdot \mu _{\mcX} (A_i) \in \mfR ^{1/r} \cup \{ \infty \}, 
\]
where it takes value $\infty$ if 
there exist infinitely many $i$ with $\dim F(A_i) \cdot \mu _{\mcX} (A_i) >m$ 
for some integer $m$.

Next, we define the motivic measure on singular varieties \cite{DL}, \cite{Yas:2}. 

Let $X$ be a variety of dimension $d$. 
For a non-negative integer $n$, a subset $A \subset |J _{\infty} X|$ is called \textit{stable} at level $n$
if 
\begin{itemize}
\item $A = \varphi _{\infty, n} ^{-1} \varphi _{\infty, n} (A)$, 
\item $\varphi_{\infty, n} (A)$ is a constructible subset of $|J _{n} X|$, and
\item for any $m \ge n$ the truncation morphism 
$\varphi _{m,n} : \varphi _{\infty, m}(A) \to \varphi _{\infty, n}(A)$
is a piecewise trivial fibration with fibers $\mbA ^{(m-n)d}$. 
\end{itemize}
For such a subset $A \subset |J _{\infty} X|$, we define 
\[
\mu _{X} (A):= \{ \varphi _{\infty, n} (A) \} \mbL ^{-nd} \in \mfR ^{\frac{1}{r}}. 
\]
A subset $A \subset |J _{\infty} X|$ is called \textit{negligible} 
if there exist constructible subsets 
$C_i \subset \varphi  _{\infty, i} (J _{\infty} X)$ for $i \ge 0$ such that $A = \bigcap _{i \ge 0} C_i$ and 
$\lim _{i \to \infty} \dim C_i -di = - \infty$ hold. 
As in the case of smooth DM stacks, 
we can define the motivic integration on singular varieties, replacing $n$-cylinders by stable subsets at level $n$ 
in the above definition.

\subsection{The transformation rule}\label{subsec:trans}
Yasuda proved the transformation rule for a proper birational morphism from a smooth DM stack to a variety. 

First, we define the \textit{shift function} \cite{Yas:2}. 

Let $\mcX$ be a smooth DM stack of dimension $d$, $x \in |\mcX|$ a $\mbC$-valued point, and 
$a: \mu _l \hookrightarrow \Aut (x)$ an embedding. 
Then the cyclic group $\mu _l$ acts on the tangent space $T_x \mcX$ by the embedding $a$, and 
induces a decomposition $T_x \mcX = \bigoplus _{i = 1} ^l T_{x,i}$, where 
$T_{x,i}$ is the eigenspace on which 
$\xi _l := e^{2\pi \sqrt{-1} / l} \in \mu _l$ acts by the multiplication of $\xi _l ^i$. 
Then we define 
\[
\sht(a) := d - \frac{1}{l} \sum _{i=1} ^l i \cdot \dim T_{x,i} \in \mbQ. 
\]
Since the space $|\mcJ _{0} \mcX|$ is set theoretically equal to 
$\bigsqcup _{l \ge 1} \bigsqcup _{x \in |\mcX|} \Conj (\mu _l, \Aut (x))$, 
we can write a point of $|\mcJ _{0} \mcX|$ by a pair $(x, a)$ with $x \in |\mcX|$ and $a \in \Conj (\mu _l, \Aut (x))$. 
Then we can define the shift function on $|\mcJ _{0} \mcX|$ by followings
\[
\sht: |\mcJ _{0} \mcX| \to \mbQ; \qquad (x, a) \mapsto \sht (a). 
\]
This function is constant on each connected component $\mcV$ of $|\mcJ _{0} \mcX|$. 
We define the shift function $\mfs _{\mcX}$ on $|\mcJ _{\infty} \mcX|$ by the composite map
\[
|\mcJ _{\infty} \mcX| \xrightarrow{\varphi _{\infty, 0}} |\mcJ _{0} \mcX| 
\xrightarrow{\sht} \mbQ. 
\]
So we have a measurable function
\[
\mbL ^{\mfs _{\mcX}} : |\mcJ _{\infty} \mcX| \to \mfR ^{1/r}
\]
for some sufficiently divisible positive integer $r$. 

Next, we define the \textit{order function} along an ideal. 
Let $\mcX$ be a DM stack, $\mcY$ a closed substack of $\mcX$, and $\mcI$ its ideal sheaf on $\mcX$. 
For $\alpha \in |\mcJ _{\infty} \mcX| \setminus |\mcJ _{\infty} \mcY|$, we define a rational number $\ord _{\mcI} \alpha$ as follows. 
The twisted jet $\alpha$ can be written by a morphism $\alpha : [ \Spec \mbC [[t]] / \mu _l ] \to \mcX$ for some $l \ge 1$. 
Hence we have the lift $\overline{ \alpha} : \Spec \mbC [[t]] \to \mcX$ and have 
$\overline{ \alpha} ^{-1} : \mcO _{\mcX} \to \mbC [[t]]$. Let $m$ be the integer satisfying 
$\overline{ \alpha} ^{-1} \mcI = (t^m)$. Then we define $\ord _{\mcI} \alpha := \frac{m}{l}$. 

Let $f: \mcX \to X$ be a morphism from a DM stack to a variety. 
Then $f$ induces a map $f_{\infty} : |\mcJ _{\infty} \mcX| \to |J_{\infty} X|$ as follows: 
For a twisted jet $\mcD _{n, S} ^l \to \mcX$, we have the composition $\mcD _{n, S} ^l \to \mcX \to X$. 
Since $D_{n,S}$ is the coarse moduli space of $\mcD _{n, S} ^l$, 
the above map uniquely factors through $\mcD _{n, S} ^l \to D_{n,S}$, 
and we have an $n$-th jet $D_{n,S} \to X$. 
Since the map $\mcD _{n, S} ^l \to D_{n,S}$ is defined by $t \mapsto t ^l$, 
the order functions on $\mcX$ and $X$ are compatible with $f_{\infty}$. 
That is, 
for an ideal sheaf $I$ on $X$, $\ord _{I \mcO _{\mcX}} = \ord _I \circ f_{\infty}$ holds. 

Here, we can state Yasuda's transformation rule for a proper birational morphism $f: \mcX \to X$ 
from a smooth DM stack to a variety. 
The \textit{Jacobian ideal sheaf} of $f$ is defined to be the 
$0$-th Fitting ideal sheaf of $\Omega _{\mcX / X}$, and we denote by $\Jac _f$. 

\begin{thm}[\cite{Yas:2}]\label{thm:trans}
Let $X$ be a variety, $\mcX$ a smooth DM stack, and $f: \mcX \to X$ a proper birational morphism. 
Let $A$ be a subset of $|J _{\infty} X |$ and $F: A \to \mfR ^{1/r}$ a measurable function. 
Then $F \circ f_{\infty}: f_{\infty} ^{-1} (A) \to \mfR ^{1/r}$ is measurable, and 
\[
\int _A F d\mu _{X} = 
\int _{f_{\infty} ^{-1} (A)} (F \circ f_{\infty}) \mbL ^{- \ord _{\Jac _f} + \mfs _{\mcX}} 
d \mu _{\mcX} \in \mfR ^{1/r} \cup \{ \infty \}
\]
holds. 
\end{thm}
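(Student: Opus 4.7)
The plan is to adapt the classical Kontsevich--Denef-Loeser change-of-variables formula for motivic integration to the twisted setting. First, by the standard reduction built into the definition of $\int F d \mu_X$, it suffices to verify the identity when $A$ is a cylinder and $F = \mbL^{-\ord_I}$ for an ideal sheaf $I$ on $X$; the general measurable case follows by additivity and a convergence argument on the cylindrical exhaustion of $A$. The compatibility $\ord_{I \mcO_{\mcX}} = \ord_I \circ f_\infty$ recalled in the preamble (which comes from factoring twisted jets through their coarse spaces, $t \mapsto t^l$) shows that the integrand transforms correctly, so the real content is the identity of measures
\[
\mu_X(A) = \int_{f_\infty^{-1}(A)} \mbL^{-\ord_{\Jac_f} + \mfs_{\mcX}} d \mu_{\mcX}
\]
for cylindrical $A \subset |J_\infty X|$.

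Next I would stratify both sides. On the stack side, partition $|\mcJ_\infty \mcX| = \bigsqcup_{l \ge 1} |\mcJ_\infty^l \mcX|$ by order of the twisting group, and inside each piece by the components $\mcV \subset |\mcJ_0 \mcX|$ of the inertia (on each of which $\mfs_{\mcX}$ is constant) and by level sets of $\ord_{\Jac_f}$ (constructible on cylinders). To compute each stratum's contribution, work in an étale chart in which $\mcX \cong [M/G]$ with $M$ smooth and $G$ finite, and use the description
\[
\mcJ_n^l \mcX \cong \bigsqcup_{a \in \Conj(\mu_l, G)} [J_{nl}^{(a)} M / C_a]
\]
recalled in the excerpt. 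Composing a representable twisted jet $\mcD_{n,S}^l \to [M/G]$ with the coarse moduli map $\mcD_{n,S}^l \to D_{n,S}$ produces an ordinary $n$-jet of $X$, and the substitution $t \mapsto t^l$ gives exactly the order rescaling used in defining $\ord$ on $\mcX$.

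The local counting then splits into two independent pieces. The classical Kontsevich argument, applied to the proper birational morphism $M \to X$ (a scheme-level map, after étale base change), shows that for large $n$ the truncation $J_{nl} M \to J_{nl} X$ is stratified into piecewise trivial $\mbA^{\ord_{\Jac}(\gamma)}$-fibrations over its image, producing the factor $\mbL^{-\ord_{\Jac_f}}$. The factor $\mbL^{\mfs_{\mcX}}$ comes from passing from $J_{nl} M$ to its closed subscheme $J_{nl}^{(a)} M$: under the $\mu_l$-weight decomposition $T_x M = \bigoplus_{i=1}^l T_{x,i}$, the jet coordinates along $T_{x,i}$ can only occur in degrees $\equiv i \pmod l$, so the codimension of $J_{nl}^{(a)} M$ inside $J_{nl} M$ around a fixed point is, at the relevant order, exactly $nd - \sum_i \lfloor (nl-i)/l + 1\rfloor \dim T_{x,i}$, which normalises to $\sht(a) = d - \frac{1}{l} \sum i \cdot \dim T_{x,i}$ after dividing by $\mbL^{nd}$ as in the definition of $\mu_{\mcX}$. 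Combining both contributions with the normalising $\mbL^{-nd}$ yields the claimed weight $\mbL^{-\ord_{\Jac_f} + \mfs_{\mcX}}$.

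The main obstacle will be Step~3: simultaneously bookkeeping the Jacobian weight (which is essentially a global, coarse-space computation) and the age-type shift (which is an étale-local computation near the $\mu_l$-fixed loci), and verifying that the piecewise trivial fibrations glue coherently across étale charts and across the twist-order strata $\mcJ_n^l \mcX$. One must also check that the various negligible loci on both sides correspond under $f_\infty$, and that the refinement of stratifications dictated by the $\Conj(\mu_l, G)$-decomposition is absorbed by the equivalence relation $\sim_n$ defining $\mfR^{1/r}$, which in particular forces $r$ to be chosen divisible enough to accommodate every value of $\sht(a)$ that appears.
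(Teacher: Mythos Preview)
The paper does not prove this theorem: it is stated with the citation \cite{Yas:2} and used as a black box, so there is no in-paper proof to compare your proposal against. Your outline is a plausible high-level sketch of the strategy in Yasuda's original argument (reduce to cylinders, stratify by twist order and inertia component, work \'etale-locally in a chart $[M/G]$, and separate the classical Jacobian contribution from the age/shift contribution coming from the codimension of $J_{nl}^{(a)}M$ in $J_{nl}M$), but since the present paper supplies no argument of its own, the comparison you were asked to make is vacuous here.
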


\subsection{Minimal log discrepancies and jet schemes}
In \cite{EMY}, Ein, Musta\c{t}\u{a}, and Yasuda showed 
that mld's can be described by the language of jet schemes. 
Suppose $X$ is a $\mbQ$-Gorenstein variety with Gorenstein index $r$ and 
set $n:= \dim X$. 
Then, we have a natural map
\[
(\Omega _X ^n )^{\otimes r} \longrightarrow \mcO _X (r K_X).
\]
Since $\mcO _X (r K_X)$ is an invertible sheaf, 
we have an ideal sheaf $J_{r ,X}$ such that the image of this map is $J_{r ,X} \mcO _X (r K_X)$. 

\begin{thm}[\cite{EMY}]\label{thm:EMY}
Let $X$ be a normal, $d$-dimensional $\mbQ$-Gorenstein variety with Gorenstein index $r$, 
$Y_1, \ldots , Y_s$ proper closed subschemes of $X$, and $W$ a proper closed subset of $X$. 
If $q_1, \ldots , q_s$ are positive real numbers, then 
\[
\mld _W (X, \sum _{i=1} ^s q_i Y_i) 
= \inf _{m \in \mbN ^s}  \Big \{ 
d - \sum _{i=1} ^s q_i m_i - 
\dim \int _{A_m} \mbL ^{\frac{1}{r} \ord_{J_{r,X}}} d \mu _X
\Big \}, 
\]
where $A_m := \varphi _{\infty , 0} ^{-1} (W) \cap \bigcap _{1 \le i \le s} \ord ^{-1} _{I_{Y_i}} ( \ge m_i) 
\subset J_{\infty} X $. 

Moreover, if the mld is finite, then the infimum on the right-hand side is actually a minimum, and 
the minimum is attained by some $m \in S$, where $S$ is a finite subset of $\mbN ^s$ and 
only depends on the numerical data of a log resolution of $(X, \sum _{i=1} ^s q_i Y_i)$ and $W$. 
If the mld is infinite, then a negative value is attained by some $m \in S$. 
\end{thm}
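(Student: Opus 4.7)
The plan is to apply Yasuda's transformation rule (Theorem \ref{thm:trans}) to a log resolution of $(X, \sum q_i Y_i)$ and read off the minimal log discrepancy by stratifying the resulting motivic integral by contact loci.

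First, I would choose a log resolution $f \colon X' \to X$ of $(X, \sum q_i Y_i)$ refined so that $f^{-1}(W)$ is a divisor and the total transform of all relevant subschemes is simple normal crossing with components $E_1, \ldots, E_N$. Setting $k_\alpha := \ord_{E_\alpha}(K_{X'/X})$ and $b_{\alpha,i} := \ord_{E_\alpha}(I_{Y_i}\mcO_{X'})$, one has $a_{E_\alpha}(X, \sum q_i Y_i) = 1 + k_\alpha - \sum_i q_i b_{\alpha,i}$. I would then apply Theorem \ref{thm:trans} to $f$; since $X'$ is a smooth variety, the shift function vanishes. The essential technical identity is
\[
\tfrac{1}{r}\ord_{J_{r,X}\mcO_{X'}} = \ord_{\Jac_f} - \ord_{K_{X'/X}},
\]
which I would establish by comparing the image $J_{r,X}\mcO_{X'} \cdot f^*\mcO_X(rK_X)$ of $(f^*\Omega_X^d)^{\otimes r}$ inside $f^*\mcO_X(rK_X)$ with its image $\Jac_f^r \cdot (\Omega_{X'}^d)^{\otimes r}$ inside $(\Omega_{X'}^d)^{\otimes r}$, and relating the two target line bundles through the canonical section of $\mcO_{X'}(rK_{X'/X})$. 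After substitution, the integrand on $X'$ reduces to $\mbL^{-\ord_{K_{X'/X}}}$.

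Next, I would decompose $|J_\infty X'|$ into contact loci $\Cont^{\ge \nu}(E_\bullet)$ for $\nu \in \mbN^N$. The standard computation yields $\dim \mu_{X'}(\Cont^{\ge \nu}) = d - \sum_\alpha \nu_\alpha$, and on $\Cont^{\ge \nu}$ the integrand takes the value $\mbL^{-\sum_\alpha k_\alpha \nu_\alpha}$. The preimage $f_\infty^{-1}(A_m)$ is cut out by the inequalities $\sum_\alpha b_{\alpha,i}\nu_\alpha \ge m_i$ together with the condition that the $0$-jet lies in $f^{-1}(W)$; by the SNC choice of resolution, any $\nu$ contributing to the leading dimension must be supported on a collection of $E_\alpha$ with nonempty common intersection inside $f^{-1}(W)$, in particular containing at least one $E_{\alpha_0}$ with $\cent_X(E_{\alpha_0}) \subset W$. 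Assembling,
\[
\dim \int_{A_m} \mbL^{\frac{1}{r}\ord_{J_{r,X}}} d\mu_X
= \max_\nu \bigl( d - \sum_\alpha \nu_\alpha(k_\alpha + 1) \bigr),
\]
the maximum being over such admissible $\nu$.

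Finally, substituting into the right-hand side of the theorem and for each admissible $\nu$ setting $m_i = \sum_\alpha b_{\alpha,i}\nu_\alpha$ (which maximises $\sum_i q_i m_i$ among feasible $m$), the infimum over $m$ collapses to
\[
\inf_\nu \sum_\alpha \nu_\alpha\, a_{E_\alpha}.
\]
In the finite-mld case this infimum is realised by taking $\nu$ to be the indicator of a minimising $E_{\alpha_0}$ with $\cent_X(E_{\alpha_0}) \subset W$, yielding $\mld_W(X, \sum q_i Y_i)$; the corresponding $m = (b_{\alpha_0,1}, \ldots, b_{\alpha_0,s})$ varies over a finite set $S$ indexed by components of the chosen resolution. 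In the $-\infty$ case the same single-support construction applied to an $E_{\alpha_0}$ with $a_{E_{\alpha_0}} < 0$ and $\cent_X(E_{\alpha_0}) \subset W$ already produces a negative value at some $m \in S$. The main obstacle is the $J_{r,X}$ identity: one must track three line bundles carefully and allow $K_{X'/X}$ to be non-effective when $X$ is worse than canonical; once that is handled, the remaining dimension bookkeeping on the SNC resolution is standard.
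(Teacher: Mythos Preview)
The paper does not give its own proof of this theorem: Theorem~\ref{thm:EMY} is stated with the attribution \cite{EMY} and no proof environment follows. The only supplementary content is Remark~\ref{rmk:s}, which records exactly the description of the finite set $S$ that you arrive at, namely $S = \{(\ord_{D_j} Y_1, \ldots, \ord_{D_j} Y_s) \mid 1 \le j \le c\}$ for the divisors $D_j$ on a log resolution with $\pi(D_j) \subset W$.

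Your sketch is the standard route to this result and is consistent with the original argument in \cite{EMY}. The one place where the paper's own computations touch your argument is in the proof of Lemma~\ref{lem:mld_stack}: there the identity $\Jac_f^r = J_{r,X}\,\mcO_{\mcX}(-rK_{\mcX/X})$ is established by exactly the comparison of images inside line bundles that you outline, and then specialised to the crepant case $K_{\mcX/X}=0$. Your version keeps the $K_{X'/X}$ term, which is what one needs for a genuine (non-crepant) log resolution; this is correct, and your caution about $K_{X'/X}$ possibly being non-effective is warranted but does not affect the order-function identity, since one works with the difference of orders of invertible sheaves rather than with an ideal.

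In short, there is nothing in the paper to compare your proposal against beyond Remark~\ref{rmk:s}, and your outline is a faithful reconstruction of the cited argument.
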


\begin{rmk}\label{rmk:s}
The finite set $S$ can be constructed as follows. 
Take a log resolution $\pi : X' \to X$ of the log pair $(X, \sum _{i=1} ^s q_i Y_i)$ and $W$. 
Let $D_1, \ldots , D_c$ be the divisors on $X'$ satisfying $\pi (D_j) \subset W$. 
Then we can take $S$ as
\[
S = \{ m=(\ord _{D_j} Y_1, \ldots , \ord _{D_j} Y_s) \in \mbN ^s \mid 1 \le j \le c \}. 
\]
\end{rmk}

\section{Lower semi-continuity problems on varieties with quotient singularities}
\subsection{Minimal log discrepancies and twisted jet stacks}
Let $X$ be a $d$-dimensional $\mbQ$-Gorenstein variety with Gorenstein index $r$, 
$\mfa$ an $\mbR$-ideal sheaf, and $W$ a closed subset of $X$. 
Assume that there exists a crepant resolution $f : \mcX \to X$ in the category of DM stacks. 
In this setting, we can describe $\mld _W (X, \mfa)$ by a motivic integration on $\mcX$. 

The $\mbR$-ideal sheaf $\mfa$ can be written by $\mfa = \prod _{i=1} ^s \mfa _i ^{q_i}$ 
with ideal sheaves $\mfa _i$ and positive real numbers $q_i$. 
By Theorem \ref{thm:EMY}, 
\[
\mld _W (X, \mfa) = 
\inf _{m \in \mbN ^s}  \Big \{ 
d - \sum _{i=1} ^s q_i m_i - 
\dim \int _{A_{W,m}} \mbL ^{\frac{1}{r} \ord_{J_{r,X}}} d \mu _X
\Big \}, 
\]
where $A_{W,m} := \varphi _{\infty , 0} ^{-1} (W) \cap \bigcap _{1 \le i \le s} \ord ^{-1} _{\mfa _i} ( \ge m_i) 
\subset |J_{\infty} X| $. 

We apply Theorem \ref{thm:trans} to the resolution $f$. 
We write $\mfa ' _i := \mfa _i \mcO _{\mcX}$. 

\begin{lem}\label{lem:mld_stack}
Let $X, \mcX, \mfa, \mfa'$ and $W$ be as above. Then we have 
\[
\mld _W (X, \mfa) = 
\inf _{m \in \mbN ^s}  \Big \{ 
d - \sum _{i=1} ^s q_i m_i - 
\dim \int _{A ' _{W,m}} \mbL ^{\mfs _{\mcX}} d \mu _{\mcX}
\Big \}, 
\]
where we denote
\[
A ' _{W,m} := (f \circ \varphi _{\infty , \text{b}}) ^{-1} (W) \cap 
\bigcap _{1 \le i \le s} \ord ^{-1} _{\mfa ' _i} ( \ge m_i ) 
\subset |\mcJ _{\infty} \mcX|.
\]
\end{lem}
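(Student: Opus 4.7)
The plan is to combine Theorem \ref{thm:EMY} with Yasuda's transformation rule (Theorem \ref{thm:trans}) applied to the crepant resolution $f: \mcX \to X$. Starting from the expression for $\mld_W(X, \mfa)$ given by Theorem \ref{thm:EMY}, I would take $F = \mbL^{\frac{1}{r}\ord_{J_{r,X}}}$ and $A = A_{W,m}$ in Theorem \ref{thm:trans} to obtain
\[
\int_{A_{W,m}} \mbL^{\frac{1}{r}\ord_{J_{r,X}}} d\mu_X = \int_{f_\infty^{-1}(A_{W,m})} \mbL^{\frac{1}{r}\ord_{J_{r,X}\mcO_\mcX} - \ord_{\Jac_f} + \mfs_\mcX} d\mu_\mcX.
\]
The lemma then reduces to two checks: (i) $f_\infty^{-1}(A_{W,m}) = A'_{W,m}$, and (ii) the exponent simplifies to $\mfs_\mcX$, i.e.\ $\frac{1}{r}\ord_{J_{r,X}\mcO_\mcX} = \ord_{\Jac_f}$ on $|\mcJ_\infty \mcX|$.

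Step (i) is routine. Using the compatibility $\ord_{I\mcO_\mcX} = \ord_I \circ f_\infty$ recorded in section \ref{subsec:trans}, the order conditions along $\mfa_i$ defining $A_{W,m}$ pull back under $f_\infty$ to the order conditions along $\mfa'_i = \mfa_i \mcO_\mcX$ defining $A'_{W,m}$; and the centre condition $\varphi^X_{\infty,0}(\gamma) \in W$ becomes $(f \circ \varphi^\mcX_{\infty,\text{b}})(\alpha) \in W$ via the factorization of $f_\infty$ through the coarse moduli space of each twisted jet.

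Step (ii) is where the crepancy assumption $K_\mcX = f^*K_X$ enters, and is the heart of the proof. I would compare two factorizations of the same natural map $f^*(\Omega^d_X)^{\otimes r} \to \mcO_\mcX(rK_\mcX)$: one routes through $f^*\mcO_X(rK_X)$ via the natural map $(\Omega^d_X)^{\otimes r} \to \mcO_X(rK_X)$, with image $J_{r,X}\mcO_\mcX \cdot \mcO_\mcX(rK_\mcX)$, using crepancy to identify $f^*\mcO_X(rK_X)$ with $\mcO_\mcX(rK_\mcX)$; the other routes through $(\Omega^d_\mcX)^{\otimes r}$ via the pullback map $f^*\Omega^d_X \to \Omega^d_\mcX$, followed by the isomorphism $(\Omega^d_\mcX)^{\otimes r} \xrightarrow{\sim} \mcO_\mcX(rK_\mcX)$ on the smooth DM stack $\mcX$, with image $\Jac_f^r \cdot \mcO_\mcX(rK_\mcX)$ since $\Jac_f$ is by definition the $0$-th Fitting ideal of $\Omega_{\mcX/X}$. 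Comparing these two images yields $J_{r,X}\mcO_\mcX = \Jac_f^r$, hence the required identity of order functions, and substituting back into the transformed integral produces the claimed formula.

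The main obstacle is carrying out the comparison in step (ii) on a DM stack rather than a smooth scheme, in particular verifying that the description of the image of $f^*(\Omega_X^d)^{\otimes r} \to (\Omega_\mcX^d)^{\otimes r}$ in terms of $\Jac_f^r$ (which is classical for morphisms of smooth schemes) continues to hold for the morphism $\mcX \to X$ from a smooth DM stack to a singular variety; this can be handled by passing to an \'etale chart $[M/G] \to \mcX$ and applying the smooth-scheme version to $M \to X$. Everything else is essentially bookkeeping between the two formulas.
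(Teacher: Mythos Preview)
Your proposal is correct and follows essentially the same route as the paper's own proof: the paper also reduces to the two checks $f_\infty^{-1}(A_{W,m}) = A'_{W,m}$ and $\Jac_f^r = J_{r,X}\mcO_{\mcX}(-rK_{\mcX/X})$ (hence $\frac{1}{r}\ord_{J_{r,X}} \circ f_\infty = \ord_{\Jac_f}$ under crepancy), then applies Theorem~\ref{thm:trans}. Your treatment is in fact slightly more careful, since you flag and address the passage to an \'etale chart needed to justify the description of the image of $f^*\Omega_X^d \to \Omega_{\mcX}^d$ as $\Jac_f \cdot \Omega_{\mcX}^d$ on a DM stack, a point the paper leaves implicit.
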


\begin{proof}
First, we easily have
$
f_{\infty} ^{-1} (A_{W,m}) = A ' _{W,m}
$.

By the definition of the Jacobian ideal $\Jac _f$, 
the image of the canonical morphism $f ^* \Omega ^d _{X} \to \Omega ^d _{\mcX}$ is 
equal to $\Jac _f \otimes \Omega ^d _{\mcX}$. 
Let $r$ be the Gorenstein index of $X$. By the definition of $J_{r, X}$, 
the image of the canonical morphism
$
(\Omega _X ^d) ^{\otimes r} \to \mcO _X (r K_X)
$
is equal to $J_{r, X} \otimes \mcO _X (r K_X)$. 
Therefore we have an equation $\Jac _f ^r = J_{r, X} \mcO _{\mcX} (- r K_{\mcX / X}) $. 
Since $f$ is crepant in this case, we have
\[
\frac{1}{r} \ord _{J_{r, X}} \circ f_{\infty} = \ord _{\Jac _f}. 
\]
By Theorem \ref{thm:trans}, we can conclude 
\[
\int _{A_{W,m}} \mbL ^{\frac{1}{r} \ord_{J_{r,X}}} d \mu _X
= 
\int _{A' _ {W,m}} \mbL ^{\mfs _{\mcX}} d \mu _{\mcX}, 
\]
which completes the proof. 
\end{proof}

\subsection{Proof of Theorem \ref{thm:main1}}
We prove a more general statement. 

\begin{thm}\label{thm:LSC_general}
Let $f : X \to T$ be a flat surjective morphism between varieties, $\mfA$ an $\mbR$-ideal sheaf on $X$, and
$W$ a closed subset of $X$. 
Assume the following three conditions: 
\begin{itemize}
\item $W$ is proper over $T$ and each $W_p$ is a proper subset of $X_p$, 
\item each $X_p$ is irreducible, reduced, normal, and $\mbQ$-Gorenstein, and
\item there exist a DM stack $\mcX$ and a morphism $g: \mcX \to X$ such that for each closed point $p \in T$, 
the induced morphism $g_p : \mcX _p \to X _p$ is a crepant resolution. 
\end{itemize}
Then, the function
\[
|T| \to \mbR \cup \{ \pm \infty \}; \qquad 
p \mapsto \mld _{W_p} (X_p, \mfA _p)
\]
is lower semi-continuous. 
\end{thm}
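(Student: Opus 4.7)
The plan is to reduce to the fiberwise formula of Lemma~\ref{lem:mld_stack} and show that, locally on $T$, the function $p \mapsto \mld_{W_p}(X_p, \mfA_p)$ is a minimum of finitely many lower semi-continuous functions. Writing $\mfA = \prod_{i=1}^s \mfa_i^{q_i}$ with $q_i > 0$ and $Y_i$ the subscheme cut out by $\mfa_i$, Lemma~\ref{lem:mld_stack} applied to the crepant DM resolution $g_p : \mcX_p \to X_p$ gives
\[
\mld_{W_p}(X_p, \mfA_p) = \inf_{m \in \mbN^s} F_m(p),
\qquad
F_m(p) := d - \sum_{i=1}^s q_i m_i - \dim \int_{A'_{W_p, m}} \mbL^{\mfs_{\mcX_p}} d\mu_{\mcX_p},
\]
where $d = \dim X_p$ is independent of $p$ by flatness. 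It then suffices to prove \textbf{(a)} that each $F_m$ is lower semi-continuous on $|T|$, and \textbf{(b)} that around any $p_0 \in |T|$ there is a neighborhood $U$ and a finite set $S \subset \mbN^s$ with $\mld_{W_p}(X_p, \mfA_p) = \min_{m \in S} F_m(p)$ for all $p \in U$.

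For (a), I would work with the relative twisted jet stack $\mcJ_n(\mcX/T)$ of Lemma~\ref{lem:relative}. For $n \ge \max_i m_i$ and $\mfa'_i := \mfa_i \mcO_{\mcX}$, let $A'_{W, m, n} \subset \mcJ_n(\mcX/T)$ be the closed substack cut out by $\ord_{\mfa'_i} \ge m_i$ together with the requirement that $\varphi^{\mcX/T}_{n, \text{b}}$ lands in $W$; its fiber over $p$ is $\varphi^{\mcX_p}_{\infty, n}(A'_{W_p, m})$ by the fiberwise compatibility of the constructions. Since order functions are $\mbC^*$-invariant, so is $A'_{W, m, n}$. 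Only finitely many pairs $(l, a)$ give nonempty components $\mcJ_n^{(l, a)}(\mcX/T)$ because stabilizer orders of $\mcX$ are bounded, and on each such component the shift function $\mfs$ is the constant $\sht(a)$. Expressing
\[
\dim \int_{A'_{W_p, m}} \mbL^{\mfs_{\mcX_p}} d\mu_{\mcX_p}
= \max_{(l, a)} \Bigl[\dim \bigl(A'_{W, m, n} \cap \mcJ_n^{(l, a)}(\mcX/T)\bigr)_p + \sht(a)\Bigr] - nd,
\]
and applying Lemma~\ref{lem:semiconti} to each piece as a stack over $T$, one obtains upper semi-continuity in $p$ of each term on the right, hence lower semi-continuity of $F_m$. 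Lemma~\ref{lem:closed} is the technical input guaranteeing that the relevant sets behave well under the truncation $\varphi^{\mcX/T}_{n, 0}$.

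For (b), I would use a simultaneous log resolution in families. Around any $p_0$, a Hironaka-type embedded resolution of $(X, \sum q_i Y_i)$ and $W$ over $T$ yields a proper birational $\pi : X'' \to X$ that is fiberwise a log resolution on a dense open of $T$. Stratifying $T$ and arguing by noetherian induction on $\dim T$ reduces to the situation where $\pi_p$ is a log resolution of $(X_p, \sum q_i (Y_i)_p)$ and $W_p$ for every $p$ in the stratum. Remark~\ref{rmk:s} then yields a uniform finite $S \subset \mbN^s$ built from the divisors on $X''$ whose image lies in $W$, with $\mld_{W_p}(X_p, \mfA_p) = \min_{m \in S} F_m(p)$ for all such $p$. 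Combined with (a), the mld function is locally a minimum of finitely many lower semi-continuous functions, and the theorem follows.

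The main obstacle I anticipate is step (a), namely tracking the shift $\mfs$ uniformly in $p$. The value of $\mfs$ depends on the stabilizer structure of $\mcX_p$ and must be read off from the decomposition of $\mcJ_n(\mcX/T)$ into components indexed by conjugacy classes of embeddings of cyclic groups into stabilizers. The $\mbC^*$-invariance of the order conditions is the crucial ingredient here: via Lemma~\ref{lem:closed} it forces each relevant subset to be closed after truncation, so that the fiber-dimension semi-continuity of Lemma~\ref{lem:semiconti} can be invoked component-by-component. Step (b) is more routine but needs the noetherian-induction stratification to glue the families of log resolutions across $T$.
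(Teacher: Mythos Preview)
Your plan matches the paper's proof closely: the paper also reduces via Lemma~\ref{lem:mld_stack}, establishes your step~(b) as Lemma~\ref{lem:s} (by exactly the generic-smoothness plus noetherian-induction argument you sketch), records the constancy of $\mfs$ on connected components of $\mcJ_0(\mcX/T)$ as Lemma~\ref{lem:shift}, and then proves upper semi-continuity of the relevant fiber dimensions using the $\mbC^*$-action.

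There is, however, one genuine gap in your step~(a). Lemma~\ref{lem:semiconti} only tells you that the set
\[
S_{\ge n} \;=\; \{\, s \in S \mid \dim_s F^{-1}(F(s)) \ge n \,\}
\]
is closed in the total space $S \subset |\mcJ_{m'}(\mcX/T)|$; it does \emph{not} by itself give that $|T|_{\ge n} = F(S_{\ge n})$ is closed in $|T|$, which is what ``upper semi-continuity in $p$'' requires. The paper closes this gap in a specific chain: Lemma~\ref{lem:closed} (applied to the $\mbC^*$-invariant closed set $S_{\ge n}$) shows $\varphi^{\mcX/T}_{m',0}(S_{\ge n})$ is closed in $|\mcJ_0(\mcX/T)|$; then one uses that $\varphi^{\mcX/T}_{0,\text{b}}$ is finite and $g$ is proper to obtain a closed subset of $|X|$; and finally, since this set lies in $|W|$, the hypothesis that $W$ is \emph{proper over $T$} forces its image in $|T|$ to be closed. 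You never invoke this properness hypothesis, yet it is exactly what makes the descent to $|T|$ work---so make that chain explicit. (A minor further point: the paper first strips off the closed locus where $W_p=\emptyset$ or $\mfA_p=0$, so that the log-resolution argument in (b) goes through; you should do the same.)
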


To prove this theorem, we borrow an idea from the argument in \cite[Proposition 2.3]{Mus:jet}. 
We need two lemmas. 
First one is about the set $S$ in Remark \ref{rmk:s}. 

\begin{lem}\label{lem:s}
Let $f : X \to T$, $\mfA$, and $W$ be as in Theorem \ref{thm:LSC_general}. 
In addition, suppose that $\mfA _p \not= 0$ for any $p \in |T|$. 
Then, for each $p \in |T|$, 
we can take a finite set $S_p \subset \mbN ^s$ in Theorem \ref{thm:EMY} 
for $(X_p, \mfA _p)$ and $W_p$ such that $\bigcup _{p \in T} S_p$ is also a finite set. 
\end{lem}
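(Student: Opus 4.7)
The plan is Noetherian induction on $T$: it suffices to find a dense open $U \subset T$ and a single finite set $S_U \subset \mbN ^s$ such that for every $p \in U$, one may take $S_p \subset S_U$ in Theorem \ref{thm:EMY}. Applying the inductive hypothesis to each irreducible component of $T \setminus U$, equipped with the restricted data (which still satisfies the hypotheses of the lemma), then produces a single finite set that works over all of $T$.

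To produce $S_U$, I would first take a log resolution $\pi _{\eta} : X' _{\eta} \to X _{\eta}$ of $(X _{\eta}, \mfA _{\eta})$ and $W _{\eta}$ over the generic point $\eta \in T$, and then spread it out to a proper birational morphism $\pi : X' \to X \times _T U$ defined over some dense open $U \subset T$. After shrinking $U$, I would arrange that $X'$ is smooth over $U$, that $\pi _p : X' _p \to X _p$ is a log resolution of $(X _p, \mfA _p)$ and $W _p$ for every $p \in U$, and that every prime divisor $D \subset X'$ contained in the exceptional locus of $\pi$ or in the support of a strict transform of some $\mfA _i$ is flat over $U$ with geometrically irreducible fibers $D _p$.

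With this set-up, Remark \ref{rmk:s} tells us that $S_p$ can be taken to consist of the tuples of orders along prime divisors on $X' _p$ whose images lie in $W _p$. Each such divisor is a fiber $D _p$ of some $D$ in our list, and because the fibers of $D \to U$ are geometrically irreducible, a local equation of $D$ at its generic point restricts to a local equation of $D _p$ at its generic point; consequently $\ord _{D _p} \mfA _{i,p} = \ord _D \mfA _i$. Hence the finite set
\[
S _U := \{ (\ord _D \mfA _1, \ldots , \ord _D \mfA _s) \in \mbN ^s \mid D \text{ is one of the divisors above} \}
\]
contains $S _p$ for every $p \in U$, completing the inductive step.

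The main technical obstacle is the spreading-out argument: one must simultaneously arrange on a single dense open $U$ the smoothness of $X' \to U$, the property that $\pi _p$ is a log resolution of the pair and subset on each fiber, and the geometric irreducibility of the fibers of each relevant divisor $D$. Each of these is an open condition on $T$ by generic flatness together with standard constructibility arguments, so iteratively shrinking $U$ finitely many times suffices; the delicate point is checking that the simple normal crossings condition for the generic-fiber log resolution really does spread out to all nearby fibers, so that $\pi _p$ remains a log resolution over the whole of $U$.
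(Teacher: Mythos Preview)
Your argument is correct and follows the same overall strategy as the paper: Noetherian induction on $T$, with the inductive step furnished by exhibiting a single log resolution that restricts to a log resolution of $(X_p,\mfA_p)$ and $W_p$ for every $p$ in a dense open $U\subset T$, so that the orders entering Remark~\ref{rmk:s} are independent of $p$.

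The only difference is in how this log resolution is produced. You construct it by resolving over the generic point and spreading out; the paper instead takes directly a log resolution $g\colon X'\to X$ of the total pair $(X,\mfA)$ and $W$, and then uses generic smoothness of $X'$ and of all intersections $\bigcap_{i\in I'}D_i$ over $T$ (together with the open set where $g$ is an isomorphism) to obtain a dense open $U$ over which $g_p$ is a fiberwise log resolution. This is a bit more economical: it bypasses the spreading-out machinery entirely, and the constancy of the orders $\ord_{D_p}\mfA_{i,p}$ is immediate since $\mfA_i\mcO_{X'}=\mcO_{X'}(-\sum_j a_{ij}D_j)$ globally, so no separate geometric-irreducibility argument for the fibers of $D$ is needed. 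Your route works just as well, but you can simplify by starting from a global resolution rather than from the generic fiber.
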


\begin{proof}
Take a log resolution $g: X' \to X$ of $(X, \mfA)$ and $W$. 
Let $D = \bigcup_{i \in I} D_i$ be the union of the exceptional divisors and 
the supports of $\mfA \mcO _{X'}$ and $g^{-1}(W)$. 
By generic smoothness, 
we can take a non-empty open set $U \subset T$ such that $X'$ is smooth over $U$ 
and $\bigcap _{i \in I'} D_i$ is also smooth over $U$ for any subset $I' \subset I$. 
Take $V \subset X$ be a non-empty open set over which $g$ is isomorphic. 
Since $f$ is flat and surjective, $f(V)$ is also open. 
Then, for any $p \in U \cap f(V)$, the restriction to the fiber $g_p : X' _p \to X_p$ 
is a log resolution of $(X _p, \mfA _p)$ and $W_p$. 
Hence, we can take $S_p$ uniformly for $p \in U \cap f(V)$. 
Since $U \cap f(V)$ is a non-empty open subset of $T$, the assertion follows from the induction on $\dim T$. 
\end{proof}

Second lemma is about the shift function on a family. 
For a smooth morphism 
$f:\mcX \to T$ from a DM stack to a variety, 
the relative $0$-th twisted jet stack $\mcJ _0 (\mcX / T)$ can be defined. 
An element of $|\mcJ _0 (\mcX / T)|$ can be written by 
$(p, \alpha)$ with $p \in |T|$ and $\alpha \in \mcJ _0 (\mcX _p)$. 
On $|\mcJ _0 (\mcX / T)|$, the shift function $\sht$ can be defined by 
$\sht (p, \alpha ) = \sht (\alpha)$. 

\begin{lem}\label{lem:shift}
Let $f: \mcX \to T$ be as above. 
For a connected component $\mcV$ of $|\mcJ _0 (\mcX / T)|$, 
$\sht$ is constant on $\mcV$. 
\end{lem}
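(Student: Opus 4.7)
The plan is to reduce to an \'etale local quotient model of $\mcX$ and observe that each ingredient appearing in the formula defining $\sht$ is then manifestly locally constant.

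First, by the \'etale local structure of DM stacks together with the compatibility of the construction in Lemma \ref{lem:relative} with \'etale base change, I may pass to an \'etale chart and assume $\mcX = [M/G]$ for a scheme $M$ smooth over $T$ and a finite group $G$. Following the proof of Lemma \ref{lem:relative}, one then has the description
\[
\mcJ_{0}(\mcX/T) \cong \bigsqcup_{l \ge 1} \bigsqcup_{a \in \Conj(\mu_l, G)} [M^{(a)} / C_a],
\]
where $M^{(a)} \subset M$ is the subscheme fixed by $a(\mu_l)$; this $M^{(a)}$ is smooth over $T$ because a finite group (of order invertible in $\mbC$) acts on the smooth $T$-scheme $M$. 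Consequently a connected component $\mcV$ of $|\mcJ_{0}(\mcX/T)|$ pulls back to a subset of a single piece $[M^{(a)}/C_a]$, so both $l$ and the conjugacy class $a$ are constant along $\mcV$.

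Next, I analyse the remaining terms of
\[
\sht(a) = \dim \mcX_p - \frac{1}{l} \sum_{i=1}^{l} i \cdot \dim T_{x,i}
\]
at a point $(p, \alpha) \in \mcV$ lying over $x \in M^{(a)}$. Because $f$ is smooth, the relative dimension $\dim \mcX_p$ is locally constant, hence constant on the connected set $\mcV$. Smoothness of $f$ also gives that $T_x \mcX_p$ is the fiber at $x$ of the relative tangent bundle $\mathcal{T} := T(M/T)|_{M^{(a)}}$. Since $|\mu_l|$ is invertible in $\mbC$, the $\mu_l$-action on $\mathcal{T}$ through $a$ splits $\mathcal{T}$ into a direct sum of locally free eigen-subbundles $\mathcal{T} = \bigoplus_{i=1}^{l} \mathcal{T}_i$ whose fibers at $x$ are precisely the $T_{x,i}$; the ranks of the $\mathcal{T}_i$ are locally constant, hence constant on every connected component of $M^{(a)}$.

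Combining the two steps, every quantity entering $\sht(a)$ is constant on $\mcV$, which proves the lemma. The only points requiring care are the \'etale-local reduction to a quotient chart and the smoothness of the relative fixed-point scheme $M^{(a)}/T$; both are standard consequences of finite group actions on smooth families in characteristic zero, so no serious obstacle is expected.
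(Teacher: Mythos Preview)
Your proof is correct and follows essentially the same approach as the paper's: reduce to a quotient chart $[M/G]$, use the decomposition of $\mcJ_0(\mcX/T)$ indexed by $l$ and $a\in\Conj(\mu_l,G)$, and then observe that the $\mu_l$-eigen-decomposition of the pulled-back relative tangent bundle has locally constant ranks. The paper phrases the last step as pulling back $T_{\mcX/T}$ along the projection $i:\mcV\to\mcX$ rather than restricting $T(M/T)$ to $M^{(a)}$, but this is the same computation viewed from the stack versus its atlas.
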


\begin{proof}
We may assume that $\mcX$ is a quotient stack $[M/G]$. 
Then the relative twisted jet stack can be written by
\[
\mcJ _{n} ([M/G]/T) = \bigsqcup _{l \ge 1} \bigsqcup _{a \in \Conj (\mu _l , G)} [J_{nl} ^{(a)} (M/T) / C_a]. 
\]
Take $l \ge 1$ and $a : \mu _l \hookrightarrow G$ such that $\mcV \subset [J_{nl} ^{(a)} (M/T) / C_a]$. 
Then, for every $x \in |\mcV|$, $a : \mu _l \hookrightarrow G$ factors through $\Aut x \subset G$. 

Let $T_{\mcX / T}$ be the relative tangent bundle on $\mcX$ over $T$, and 
$i : \mcV \to \mcX$ the projection. 
Then $\mu _l$ acts on $i ^* T_{\mcX / T}$ over $T$, 
and this action is fiberwise compatible with the $\mu _l$-action on $T_{\mcX _x}$ in Section \ref{subsec:trans}. 
This action induces a decomposition $i ^* T_{\mcX / T} = \bigoplus _{i=1} ^l T_i$, where 
$T_{i}$ is the vector bundle on which 
$\xi _l \in \mu _l$ acts by the multiplication of $\xi _l ^i$. 
It follows that for each $i$, $\dim T_{x,i}$ is constant on $|\mcV|$, which completes the proof. 
\end{proof}

\begin{proof}[Proof of Theorem \ref{thm:LSC_general}]
If $W_p = \emptyset$, then $\mld _{W_p} (X_p, \mfA _p) = \infty$. 
If $W_p \not = \emptyset$ and $\mfA _p = 0$, then $\mld _{W_p} (X_p, \mfA _p) = - \infty$. 
Since $W$ is proper over $T$, 
\[
T' := f(W) \cap \{ p \in |T| \mid \mfA _p = 0 \}
\] is closed in $|T|$. 
Replacing $T$ by $T \setminus T'$, we may assume that $\mfA _p \not= 0$ for any $p \in |T|$. 

Since $\mfA$ is an $\mbR$-ideal sheaf, it can be written by 
$\mfA = \prod _{i=1} ^s \mfA _i ^{q_i}$ with ideal sheaves $\mfA _i$ and 
positive real numbers $q_i$. 
We denote $\mfA ' _i := \mfA _i \mcO _{\mcX}$ and 
denote by $(\mfA ' _i)_p$ the restriction of ideal $\mfA ' _i$ to the fiber $\mcX _p$. 

For $p \in |T|$ and $m \in \mbN ^s$, we put 
\[
B_{p, m}:= 
(g_p \circ \varphi ^{\mcX _p} _{\infty , \text{b}}) ^{-1} (W_p) \cap 
\bigcap _{1 \le i \le s} \ord ^{-1} _{(\mfA ' _i)_p} ( \ge m_i) . 
\]
Take a finite set $S_p \subset \mbN ^s$ for each closed point $p \in T$ as in Lemma \ref{lem:s}. 
Fix a multi-index $m \in \bigcup _{p \in |T|} S_p$. 
By Lemma \ref{lem:mld_stack}, it is sufficient to prove that
the function 
\[
|T| \to \mbQ; \qquad p \mapsto \dim \int _{B _{p, m}} \mbL ^{\mfs _{\mcX _p}} d \mu _{\mcX _p}
\]
is upper semi-continuous. 

Let $\mcY _i$ be the closed substack of $\mcX$ corresponding to the ideal sheaf $\mfA ' _i$. 
We identify the relative twisted jet stack $\mcJ _m (\mcY _i / T)$ with a closed substack 
of $\mcJ _m (\mcX / T)$, and 
we also identify $B _{p,m}$ with a closed subset of $\mcJ _{\infty} (\mcX / T)_p$. 

For a connected component $\mcV$ of $|\mcJ _{0} ( \mcX /T )|$, 
the shift function is constant on $\mcV$ by Lemma \ref{lem:shift}. 
Since $|\mcJ _{0} ( \mcX /T )|$ has only finitely many connected components, 
it is sufficient to prove that the function
\[
|T| \to \mbZ; \qquad p \mapsto \dim \mu _{\mcX} \big( (\varphi ^{\mcX / T}  _{\infty, 0}) ^{-1} (\mcV) \cap B _{p, m} \big)
\]
is upper semi-continuous for each $\mcV$. 
Take a positive integer $m'$ such that $m' \ge m_i$ for any $i$. 
Since $B _{p, m}$ is an $m'$-cylinder, we have
\begin{align*}
\mu _{\mcX} \big( (\varphi _{\infty, 0} ^{\mcX / T}) ^{-1} (\mcV) \cap B _{p, m} \big) = 
\{
S \cap (f \circ g \circ \varphi _{m' , \text{b}} ^{\mcX / T}) ^{-1} (p)
\}\mbL ^{-m'd}, 
\end{align*}
where we put 
\begin{align*}
S := (\varphi_{m',0} ^{\mcX / T}) ^{-1} & (\mcV) \cap 
( \varphi_{m' ,\text{b}} ^{\mcX / T}) ^{-1} (g^{-1}(W)) \\
& \cap 
\bigcap _{1 \le i \le s} (\varphi _{m' ,m_i } ^{\mcX / T})^{-1} (|\mcJ _{m_i} (\mcY _i /T)|). 
\end{align*}
Let $F$ be the composite map 
\[
S \hookrightarrow |\mcJ _{m'} (\mcX /T) | \xrightarrow{\varphi_{m', 0} ^{\mcX / T}} 
|\mcJ _0 (\mcX /T)| \xrightarrow{\varphi_{0,\text{b}} ^{\mcX / T}} |\mcX| \xrightarrow{g} |X| \xrightarrow{f} |T|.
\]
Then this theorem reduces to the upper semi-continuity property of the function
\[
|T| \to \mbZ; \qquad p \mapsto \dim F^{-1} (p). 
\]
For each integer $n$, we set

\begin{align*}
S_{\ge n} &:= \{ s \in S \, | \, \dim _s F^{-1} ( F(s)) \ge n  \}, \\
|T|_{\ge n} &:= \{ p \in |T| \, | \, \dim F^{-1} (p) \ge n \}. 
\end{align*}
Then we have $F (S_{\ge n}) = |T|_{\ge n}$. 
We need to show that $|T|_{\ge n}$ is closed in $|T|$, but
we know only that $S_{\ge n}$ is a closed subset of $S$ by Lemma \ref{lem:semiconti}. 
The space $|\mcJ _{m'} (\mcX /T)|$ admits a $\mbC ^*$-action (see Section \ref{sec:jet_stack}). 
Each $|\mcJ _{m_i} (\mcY _i /T)|$ is a $\mbC ^*$-invariant closed subset of $|\mcJ _{m'} (\mcX /T)|$, and 
each fibre of the morphism $\varphi _{m' , 0} ^{\mcX / T}$ is invariant on this action. 
Hence $S_{\ge n}$ is a $\mbC ^*$-invariant closed subset of $|\mcJ _{m'} (\mcX /T)|$. 
By Lemma \ref{lem:closed}, $\varphi _{m', 0} ^{\mcX / T} (S_{\ge n})$ is closed in $|\mcJ _{0} (\mcX /T)|$. 
Since both of the maps $\varphi_{0,\text{b}} ^{\mcX / T}$ and $g$ are closed, 
the set 
\[
|X|_{\ge n} := (g \circ \varphi_{0,\text{b}} ^{\mcX / T}) (\varphi _{m', 0} ^{\mcX / T} (S_{\ge n}))
\]
is also closed in $|X|$. 
Because $|X|_{\ge n}$ is a subset of $|W|$ and $W$ is proper over $T$, we can conclude the set 
\[
|T|_{\ge n} = f (|X|_{\ge n})
\]
is also closed in $|T|$, which completes the proof. 
\end{proof}

\begin{proof}[Proof of Theorem \ref{thm:main1}]
We prove (i) first. Since $X$ is $\mbQ$-Gorenstein, 
we may assume $\Delta = 0$ by forcing $\Delta$ to $\mfa$. 
For $i = 1, 2$, we denote by $p_i : X \times X \to X$ the $i$-th projection. 
Since $\mfa$ is an $\mbR$-ideal sheaf, it can be written by $\mfa = \prod _{i=1} ^s \mfa _i ^{q_i}$ 
with $\mbR$-ideal sheaves $\mfa _i $ and positive real numbers $q_i$. 
Set $\mfA = \prod _{i=1} ^s (p_1 ^* \mfa _i)^{q_i}$, and $W$ be the diagonal set of $X \times X$. 
Applying Theorem \ref{thm:LSC_general} to 
the morphism $p_2 : X \times X \to X$, the ideal $\mfA$, and the closed set $W$, 
we have the assertion in (i). 

Next we prove (ii). 
Since $X$ is $\mbQ$-Gorenstein, 
we may assume $\Delta = 0$ by forcing $\Delta \times T$ to $\mfA$. 
Applying Theorem \ref{thm:LSC_general} to 
the projection $X \times T \to T$, the ideal $\mfA$, and the closed set $\{ x \} \times T$, 
we have the assertion in (ii). 
\end{proof}

Now, we can prove Corollary \ref{cor:symplectic}. 
Matsushita proved the same statement when $X$ is a smooth symplectic variety and $D_0$ 
is a $\mbQ$-divisor \cite[Proposition 2.1, Lemma 2.1]{Mat}. 
First, by Theorem \ref{thm:main1} (1), 
this can be extended to the case when $X$ is a symplectic variety with only quotient singularities. 
In addition, Kawakita's result about the ACC conjecture \cite{Kaw:discreteness} can 
extend it to the case when $D_0$ is an $\mbR$-divisor. 
\begin{proof}[Proof of Corollary \ref{cor:symplectic}]
Let $Y$ be a normal variety and $\Gamma$ be a finite subset of $[0,1]$. 
Then, we define a set $A(X, \Gamma)$ as 
\[
A(X, \Gamma) := \{ \mld _x (X, \Delta) \mid \text{$(X,\Delta)$ is a log canonical pair, 
and $\Delta \in \Gamma$}\}, 
\]
here we write $\Delta \in \Gamma$ if all coefficients of $\Delta$ are in $\Gamma$. 

Let $\Gamma$ be the set of all coefficients of $D_0$. 
By the proof of \cite[Theorem]{Sho:letter}, 
it is sufficient to show that the set $\bigcup _{i \ge 0} A(X_i, \Gamma)$ satisfies the ascending chain condition 
and that the LSC conjecture holds for each $X_i$. 

By \cite[Main Theorem, Proposition 1]{Nam}, since $X_0$ has only terminal singularities, 
$X_0$ and $X_1$ can deform to a variety $X'$ by locally trivial deformations at the same time. 
So, inductively, 
we can say that $X_i$ has only quotient singularities and that $A(X_i, \Gamma) = A(X, \Gamma)$. 
By \cite[Theorem 1.2]{Kaw:discreteness}, we already know that $A(X, \Gamma)$ is finite. 
Hence $\bigcup _{i \ge 0} A(X_i, \Gamma)$ is also finite. 
On the other hand, since $X_i$ has only quotient singularities, 
the LSC conjecture holds for each $X_i$ by Theorem \ref{thm:main1}. 
Thus, we are done. 
\end{proof}

\section{The ideal-adic semi-continuity problem on varieties with a $\mbC ^{*}$-action}
\subsection{Proof of Proposition \ref{prop:main2}}
Before we start to prove Proposition \ref{prop:main2}, 
we provide some general remarks on Conjecture \ref{conj:mustata}. 

\begin{rmk}\label{rmk:modification}
Let $(X,\Delta, \mfa)$ and $Z$ be as in Conjecture \ref{conj:mustata}, and $f: X' \to X$ a proper birational morphism 
from a $\mbQ$-Gorenstein variety $X'$. 
We denote $\mfa \mcO _{X'} := \prod _{i=1} ^s (\mfa _i \mcO _{X'})^{r_i}$. 
Suppose $\Delta ' := f^* (K_X + \Delta ) - K_{X'}$ is effective. 
Then,  
Conjecture \ref{conj:mustata} holds for $(X,\Delta, \mfa)$ and $Z$
if the conjecture holds for $(X',\Delta ', \mfa \mcO _{X'})$ and $f^{-1}(Z)$. 

This follows from that 
$\mld_{Z} (X, \Delta , \mfc) = \mld_{f^{-1}(Z)} (X', \Delta ', \mfc \mcO _{X'})$ 
holds for any $\mbR$-ideal sheaf $\mfc$ and 
that $\mfa _i + I_Z ^l = \mfb _i + I_Z ^l$ implies 
$\mfa _i \mcO _{X'} + I_{f^{-1}(Z)} ^l = \mfb _i \mcO _{X'} + I_{f^{-1}(Z)} ^l$. 
\end{rmk}

\begin{rmk}\label{rmk:one_direction}
Let $\prod _{i=1} ^s \mfa _i ^{r_i}$ be an $\mbR$-ideal sheaf, $f:X' \to X$ a proper birational morphism, 
and $E$ a divisor on $X'$. 
If $\cent _X (E) \subset Z$, there exists a positive integer $l_E$ such that: 
if ideal sheaves $\mfb _1, \ldots , \mfb _s$ satisfy $\mfa _i + I_Z ^{l_E} = \mfb _i + I_Z ^{l_E}$, then 
\[
a _E (X, \Delta, \prod _{i=1} ^s \mfa _i ^{r_i}) =a_E (X, \Delta, \prod _{i=1} ^s \mfb _i ^{r_i})
\]
holds. 
In fact, if we take $l_E$ such that $\ord _E I_Z ^{l_E} > \ord _E \mfa _i$ for each $i$, 
then $\ord _E \mfa _i = \ord _E \mfb _i$ holds, and 
this implies the above equation. 

By this remark and Remark \ref{rmk:attain}, 
we can conclude
the inequality 
\[
\mld _Z (X, \Delta, \prod _{i=1} ^s \mfa _i ^{r_i}) \ge \mld _Z (X, \Delta, \prod _{i=1} ^s \mfb _i ^{r_i})
\]
in Conjecture \ref{conj:mustata}. 
In fact, by Remark \ref{rmk:attain}, 
\[
\mld _Z (X, \Delta, \prod _{i=1} ^s \mfa _i ^{r_i}) = a _E (X, \Delta, \prod _{i=1} ^s \mfa _i ^{r_i})
\]
holds for some $E$ 
when $\mld _Z (X, \Delta, \prod _{i=1} ^s \mfa _i ^{r_i})$ is non-negative. 
Then, by the above remark and the definition of mld's, 
\[
a _E (X, \Delta, \prod _{i=1} ^s \mfa _i ^{r_i}) =a_E (X, \Delta, \prod _{i=1} ^s \mfb _i ^{r_i}) 
\ge \mld _Z (X, \Delta, \prod _{i=1} ^s \mfb _i ^{r_i})
\]
holds for any $l > l_E$. 
When $\mld _Z (X, \Delta, \prod _{i=1} ^s \mfa _i ^{r_i})$ is negative, by Remark \ref{rmk:attain}, 
$a _E (X, \Delta, \prod _{i=1} ^s \mfa _i ^{r_i})$ is negative for some $E$, and we can continue the same argument. 
\end{rmk}

Let $X = \Spec A$ be an affine variety with a $\mbC ^*$-action, 
and $A = \bigoplus _{m \in \mbZ} A^{(m)}$ be the induced graded ring structure, where
\[
A^{(m)} := \{ f \in A \mid \gamma \cdot f = \gamma ^m f \ \text{for any $\gamma \in \mbC ^*$} \}.
\]
In what follows, we assume that the $\mbC ^*$-action is of ray type, and $A = \bigoplus _{m \ge 0} A^{(m)}$. 

For an element $f \in A$, 
we get the unique expression $f = \sum _{m \ge 0} f^ {(m)}$ with $f^ {(m)} \in A^{(m)}$.
Then we define $\deg f$ and $f \circ t$ as follows: 
\[
\deg f := \min \{ m \mid f^{(m)} \not = 0 \}, \quad
f \circ t := \sum _m t^m f^{(m)} \in A[t]. 
\]
For an ideal $I \subset A$, we define $\widetilde{I} \subset A[t]$ as
\[
\widetilde{I} := \ideal \left ( \frac{f \circ t}{t^{\deg f}} \, \Big| \, f \in A \setminus \{ 0 \} \right ). 
\]
For $\gamma \in \mbC$, the restriction $\widetilde{I} _{\gamma}$ is defined as the ideal
\[
\widetilde{I} _{\gamma} := \{ f({\gamma}) \, | \, f(t) \in \widetilde{I} \} \subset A. 
\]
It is clear that $\widetilde{I} _1 = I$. 
For ${\gamma} \in \mbC ^*$, we have the ring automorphism
\[
\phi _{\gamma}: A \longrightarrow A; \quad  
\sum _m f^{(m)} \mapsto \sum _m {\gamma}^{m} f^{(m)} , 
\]
and $\phi _{\gamma} (\widetilde{I} _1) = \widetilde{I} _{\gamma}$ holds. 

\begin{proof}[Proof of Proposition \ref{prop:main2}]
Since $X$ is $\mbQ$-Gorenstein, we may assume $\Delta = 0$ by forcing $\Delta$ to $\prod _{i=1} ^s \mfa _i ^{r_i}$. 
By exchanging $t$ with $t^{-1}$ in $\mbC ^* = \Spec \mbC [t, t^{-1}]$, 
we may assume $A = \bigoplus _{m \ge 0} A^{(m)}$. 
Since $Z$ is the set of all $\mbC ^*$-fixed points, it follows that $I_Z = \bigoplus _{m \ge 1} A^{(m)}$. 
Since ideals $\mfa _1, \ldots , \mfa _s$ are $\mbC ^*$-invariant, they are homogeneous. 
Let $f_{i 1} , \ldots , f_{i k_i}$ be homogeneous elements in $A$ such that
they generate $\mfa _i$. 
Set $l_i := 1+ \max_ {1 \le j \le k_i} \{ \deg f_{i j}  \}$. 

By Remark \ref{rmk:one_direction}, it is sufficient to show that
if ideal sheaves $\mfb _1, \ldots , \mfb _s$ satisfy $\mfa _i + I_Z ^{l_i} = \mfb _i + I_Z ^{l_i}$, then 
\[
\mld _Z (X, \prod _{i=1} ^s \mfa _i ^{r_i}) 
\le \mld _Z (X, \prod _{i=1} ^s \mfb _i ^{r_i})
\]
holds. 

We first prove that $\mfa _i \subset (\widetilde{\mfb _i})_0$. 
Since $\mfa _i = \ideal (f_{i 1} , \ldots , f_{i k_i}) \subset \mfb _i + I_Z ^{l_i}$, 
there exists $h_{i j}$ for each $1 \le j \le k_i$ such that
\[
f_{i j} + h_{i j} \in \mfb _i, \qquad h_{i j} \in I_Z ^{l_i}. 
\]
In respect to the degrees, we have $\deg f_{i j} \le l_i -1 < \deg h_{i j}$. 
Since $f_{i j}$ is homogeneous, we have $f_{i j} \in (\widetilde{\mfb _i})_0$. 
We thus get $\mfa _i \subset (\widetilde{\mfb _i})_0$. 

This inclusion implies the inequality
\[
\mld _Z (X, \prod _{i=1} ^s \mfa _i ^{r_i}) 
\le \mld _Z (X, \prod _{i=1} ^s (\widetilde{\mfb _i})_0 ^{r_i}).
\]
Because $(\widetilde{\mfb _i})_1 = \mfb _i$, it is sufficient to show that
\[
\mld _Z (X, \prod _{i=1} ^s (\widetilde{\mfb _i})_0 ^{r_i}) \le 
\mld _Z (X, \prod _{i=1} ^s (\widetilde{\mfb _i})_1 ^{r_i}). 
\]
On the other hand, we have the ring automorphism $\phi _{\gamma} : A \to A$ for $\gamma \in \mbC ^*$. 
By the definition of $\widetilde{\mfb _i}$, we have 
$\phi _{\gamma} ((\widetilde{\mfb _i})_1) = ((\widetilde{\mfb _i})_{\gamma})$. 
Hence we have
\[
\mld _Z (X, \prod _{i=1} ^s (\widetilde{\mfb _i})_1 ^{r_i}) 
=
\mld _Z (X, \prod _{i=1} ^s (\widetilde{\mfb _i})_{\gamma} ^{r_i})
\]
for every $\gamma \in \mbC ^*$. 
Therefore, the proof can be completed by showing that
the function 
\[
|\mbA ^1| \to \mbR \cup \{ - \infty \}; \qquad
p \mapsto \mld _Z (X, \prod _{i=1} ^s (\widetilde{\mfb _i})_p ^{r_i}) 
\]
is lower semi-continuous. 

In order to prove the semi-continuity, we consider the relative twisted jet stacks. 
Take a $\mbC ^*$-equivariant crepant resolution $\mcX \to X$ and fix a positive integer $l$ and $m \in \mbN ^s$. 
Let $\mcY _i \subset \mcX \times \mbA ^1$ be the closed substack corresponding to $\widetilde{\mfb _i} \mcO _{\mcX \times \mbA ^1}$. 
Take a positive integer $m'$ such that $m' \ge m_i$ holds for any $i$. 
In addition, fix a connected component $\mcV$ of $|\mcJ _{0} (\mcX \times \mbA ^1 / \mbA ^1)|$. 
Consider the following twisted jet stacks and morphisms
\[
|\mcJ _{m'} (\mcX \times \mbA ^1 /\mbA ^1) | 
\xrightarrow{\varphi_{m', 0}} |\mcJ _0 (\mcX \times \mbA ^1/\mbA ^1)| 
\xrightarrow{\varphi_{0,\text{b}}} |\mcX \times \mbA ^1| 
\xrightarrow{g} |X \times \mbA ^1| \xrightarrow{f} |\mbA ^1|, 
\]
where $f$ is the second projection and $g$ is the morphism induced by the resolution $\mcX \to X$. 
Then, we set 
\[
S := 
\varphi_{m',0} ^{-1}
(\mcV) \cap
\varphi_{m' ,\text{b}}  ^{-1} (g^{-1}(Z \times \mbA ^1)) 
\cap 
\bigcap _{1 \le i \le s} \varphi _{m' ,m_i } ^{-1} (|\mcJ _{m_i} (\mcY _i /\mbA ^1)|). 
\]
Let $F$ be the composite map 
$S \hookrightarrow |\mcJ _{m'} (\mcX \times \mbA ^1 /\mbA ^1) |
\to |\mbA ^1|$. 
Then, as in the proof of Theorem \ref{thm:LSC_general}, 
the assertion can reduce to the upper semi-continuity property of the function
\[
|\mbA ^1| \to \mbZ; \qquad p \mapsto \dim F^{-1} (p). 
\]
For each integer $n$, we set
\begin{align*}
S_{\ge n} &:= \{ s \in S \, | \, \dim _s F^{-1} ( F(s)) \ge n  \}, \\
|\mbA ^1|_{\ge n} &:= \{ p \in |\mbA ^1| \, | \, \dim F^{-1} (p) \ge n \}. 
\end{align*}
Then we have $F (S_{\ge n}) = |\mbA ^1|_{\ge n}$. 

By the same reason as in the proof of Theorem \ref{thm:LSC_general}, it follows that
$(g \circ \varphi _{m', \text{b}})(S_{\ge n})$ is closed in $|X \times \mbA ^1|$. 
By definition, $\widetilde{\mfb _i}$ is isotrivial over $\mbA ^1 \setminus \{ 0 \} = \mbC ^*$. 
In addition, we note that $(g \circ \varphi _{m', \text{b}})(S_{\ge n})$ is contained in $|Z \times \mbA ^1|$ and 
that $Z \subset X$ consists of  $\mbC ^*$-fixed points. 
Hence, if $(p, \gamma) \in (g \circ \varphi _{m', \text{b}})(S_{\ge n})$ 
for some $p \in |X|$ and $\gamma \in \mbA ^1 \setminus \{ 0 \}$, 
then $(p, \gamma ') \in (g \circ \varphi _{m', \text{b}})(S_{\ge n})$ 
for any $\gamma ' \in \mbA ^1 \setminus \{ 0 \} $. 
Since $(g \circ \varphi _{m', \text{b}})(S_{\ge n})$ is closed, 
the latter condition implies $(p, 0) \in (g \circ \varphi _{m', \text{b}})(S_{\ge n})$. 
Therefore we can conclude that $F (S_{\ge n})$ is one of $\emptyset$, $\{ 0 \}$ or $\mbA ^1$, 
which completes the proof. 
\end{proof}

\subsection{The toric case}
If $(X, \Delta, \prod _{i=1} ^s \mfa _i ^{r_i})$ is a toric triple and 
$Z$ is a torus invariant closed set, then we can construct a 
$\mbC ^*$-action on $X$  as in Proposition \ref{prop:main2}. 

\begin{proof}[Proof of Theorem \ref{thm:toric}]
By Remark \ref{rmk:property}, we may assume that $X$ is an affine toric variety and $Z$ is irreducible. 
In addition, by Remark \ref{rmk:modification}, we may assume that $X$ is an affine $\mbQ$-factorial toric variety. 
Note that a $\mbQ$-factorial toric variety has a crepant resolution in the category of smooth DM toric stacks. 
As in the proof of Proposition \ref{prop:main2}, we may assume $\Delta = 0$. 

Let $n$ be the dimension of $X$, $M$ a free $\mbZ$-module of rank $n$, and set $M_{\mbR} := M \otimes \mbR$. 
Since $X$ is an affine normal toric variety, we may assume
$X = \Spec \mbC [\sigma \cap M]$ for some rational convex cone $\sigma \subset M_{\mbR}$. 
For $m \in \sigma \cap M$, we denote by $\chi ^m$ the corresponding function in $\mbC [\sigma \cap M]$. 
We call such an element a \textit{monomial}. 
Since $\mfa _i$ and $I_Z$ are torus invariant ideals, they are generated by monomials in $\mbC [\sigma \cap M]$. 

If $Z = \emptyset$, then the assertion is trivial. 
In what follows, we assume $Z \not = \emptyset $. 
Since $Z$ is torus invariant, there exists a face $F$ of $\sigma$ such that 
$I_Z$ is generated by the monomials $\chi ^m$ for $m \in (\sigma \setminus F) \cap M$. 
Since $Z \not = \emptyset$, we can take a hyperplane $H$ such that 
$\sigma \cap H = F$. 
Hence there exists $w \in M^{\vee}$ such that 
$\langle m , w \rangle \ge 1$ for every $m \in (\sigma \setminus F) \cap M$ and that
$\langle m , w \rangle = 0$ for every $m \in F$. 
We fix such $w \in M^{\vee}$. 

The element $w \in M^{\vee}$ provides the $\mbC ^*$-action on the ring $\mbC [\sigma \cap M]$ as follows: 
for $\gamma \in \mbC ^*$
\[
\mbC [\sigma \cap M] \longrightarrow \mbC [\sigma \cap M] ;\qquad 
\chi ^m \mapsto \gamma ^{\langle m , w \rangle} \chi ^m.
\]
Then, this $\mbC ^*$-action is of ray type because $\langle m , w \rangle \ge 0$ for every $m \in \sigma \cap M$. 
In addition, $Z$ is the set of all $\mbC ^*$-fixed points 
because $I_Z = \bigoplus _{m \ge 1} A^{(m)}$.
Since $\mfa _1, \ldots , \mfa _s$ are torus invariant, $\mfa _1, \ldots , \mfa _s$ are $\mbC ^*$-invariant. 
Therefore we can apply Proposition \ref{prop:main2} to the toric pair and complete the proof. 
\end{proof}

\section*{Acknowledgments}
The author expresses his gratitude to his advisor Professor Yujiro Kawamata 
for his encouragement and valuable advice. 
He is grateful to Atsushi Ito and 
Professors Shihoko Ishii, Daisuke Matsushita, Shinnosuke Okawa, Shunsuke Takagi, 
and Yoshinori Gongyo for useful comments and suggestions.
He is supported by the Grant-in-Aid for Scientific Research
(KAKENHI No. 25-3003) and the Grant-in-Aid for JSPS fellows.
\begin{bibdiv}
 \begin{biblist*}
 
\bib{Amb2}{article}{
   author={Ambro, Florin},
   title={The Adjunction Conjecture and its applications},
   eprint={arXiv:9903060v3}
}

\bib{Amb}{article}{
   author={Ambro, Florin},
   title={On minimal log discrepancies},
   journal={Math. Res. Lett.},
   volume={6},
   date={1999},
   number={5-6},
   pages={573--580},
}

\bib{dFEM:acc_lct}{article}{
   author={de Fernex, Tommaso},
   author={Ein, Lawrence},
   author={Musta{\c{t}}{\u{a}}, Mircea},
   title={Shokurov's ACC conjecture for log canonical thresholds on smooth
   varieties},
   journal={Duke Math. J.},
   volume={152},
   date={2010},
   number={1},
   pages={93--114},
}

\bib{dFM:limit_lct}{article}{
   author={de Fernex, Tommaso},
   author={Musta{\c{t}}{\u{a}}, Mircea},
   title={Limits of log canonical thresholds},
   journal={Ann. Sci. \'Ec. Norm. Sup\'er. (4)},
   volume={42},
   date={2009},
   number={3},
   pages={491--515},
}

\bib{DL}{article}{
   author={Denef, Jan},
   author={Loeser, Fran{\c{c}}ois},
   title={Germs of arcs on singular algebraic varieties and motivic
   integration},
   journal={Invent. Math.},
   volume={135},
   date={1999},
   number={1},
   pages={201--232},
}

\bib{EM}{article}{
   author={Ein, Lawrence},
   author={Musta{\c{t}}{\v{a}}, Mircea},
   title={Inversion of adjunction for local complete intersection varieties},
   journal={Amer. J. Math.},
   volume={126},
   date={2004},
   number={6},
   pages={1355--1365},
}

\bib{EMY}{article}{
   author={Ein, Lawrence},
   author={Musta{\c{t}}{\u{a}}, Mircea},
   author={Yasuda, Takehiko},
   title={Jet schemes, log discrepancies and inversion of adjunction},
   journal={Invent. Math.},
   volume={153},
   date={2003},
   number={3},
   pages={519--535},
}

\bib{Gortz}{book}{
   author={G{\"o}rtz, Ulrich},
   author={Wedhorn, Torsten},
   title={Algebraic geometry I},
   series={Advanced Lectures in Mathematics},
   publisher={Vieweg + Teubner, Wiesbaden},
   date={2010},
}

\bib{Kaw:klt}{article}{
   author={Kawakita, Masayuki},
   title={Ideal-adic semi-continuity problem for minimal log discrepancies},
   journal={Math. Ann.},
   volume={356},
   date={2013},
   number={4},
   pages={1359--1377},
}

\bib{Kaw:discreteness}{article}{
   author={Kawakita, Masayuki},
   title={Discreteness of log discrepancies over log canonical triples on a fixed pair},
   eprint={arXiv:1204.5248v1}
}

\bib{Kaw:surface}{article}{
   author={Kawakita, Masayuki},
   title={Ideal-adic semi-continuity of minimal log discrepancies on
   surfaces},
   journal={Michigan Math. J.},
   volume={62},
   date={2013},
   number={2},
   pages={443--447},
}

\bib{KM:quot}{article}{
   author={Keel, Se{\'a}n},
   author={Mori, Shigefumi},
   title={Quotients by groupoids},
   journal={Ann. of Math. (2)},
   volume={145},
   date={1997},
   number={1},
   pages={193--213},
}

\bib{Kol:which}{article}{
   author={Koll\'ar, J\'anos},
   title={Which powers of holomorphic functions are integrable?},
   eprint={arXiv:0805.0756v1}
}

\bib{KM}{book}{
   author={Koll{\'a}r, J{\'a}nos},
   author={Mori, Shigefumi},
   title={Birational geometry of algebraic varieties},
   series={Cambridge Tracts in Mathematics},
   volume={134},
   publisher={Cambridge University Press},
   date={1998},
}

\bib{LM}{book}{
   author={Laumon, G{\'e}rard},
   author={Moret-Bailly, Laurent},
   title={Champs alg\'ebriques},
   volume={39},
   publisher={Springer-Verlag},
   place={Berlin},
   date={2000},
}

\bib{Mat}{article}{
   author={Matsushita, Daisuke},
   title={On almost holomorphic Lagrangian fibrations},
   eprint={arXiv:1209.1194v1}
}

\bib{Mus:jet}{article}{
   author={Musta{\c{t}}{\v{a}}, Mircea},
   title={Singularities of pairs via jet schemes},
   journal={J. Amer. Math. Soc.},
   volume={15},
   date={2002},
   number={3},
   pages={599--615},
}

\bib{Nam}{article}{
   author={Namikawa, Yoshinori},
   title={On deformations of $\mathbb{Q}$-factorial symplectic varieties},
   journal={J. Reine Angew. Math.},
   volume={599},
   date={2006},
   pages={97--110},
}

\bib{Sho:letter}{article}{
   author={Shokurov, V. V.},
   title={Letters of a bi-rationalist. V. Minimal log discrepancies and
   termination of log flips},
   journal={Tr. Mat. Inst. Steklova},
   volume={246},
   date={2004},
   number={Algebr. Geom. Metody, Svyazi i Prilozh.},
   pages={328--351},
   translation={
      journal={Proc. Steklov Inst. Math.},
      date={2004},
      number={3 (246)},
      pages={315--336},
   },
}

\bib{Yas:1}{article}{
   author={Yasuda, Takehiko},
   title={Twisted jets, motivic measures and orbifold cohomology},
   journal={Compos. Math.},
   volume={140},
   date={2004},
   number={2},
   pages={396--422},
}

\bib{Yas:2}{article}{
   author={Yasuda, Takehiko},
   title={Motivic integration over Deligne-Mumford stacks},
   journal={Adv. Math.},
   volume={207},
   date={2006},
   number={2},
   pages={707--761},
}

\end{biblist*}
\end{bibdiv}
\end{document}